\theoremstyle{plain}
\numberwithin{equation}{section}
\newtheorem{theorem}{Theorem}[section]
\newtheorem{proposition}[theorem]{Proposition}
\newtheorem{lemma}[theorem]{Lemma}
\newcommand{\LL}{\mathcal L}
\definecolor{darkred}{rgb}{0.8,0,0}
\definecolor{darkblue}{rgb}{0,0,0.7}
\definecolor{darkgreen}{rgb}{0,0.4,0}
\newcommand{\KKK}{\color{black}}
\newcommand{\PPP}{\color{black}}
\newcommand{\eps}{\varepsilon}
\newcommand{\un}{{\rm 1\kern -2.5pt l}}
\def\u{\mathbf{u}}
\def\eps{\varepsilon}
\def\eps{\varepsilon}
\def\u{\mathbf{u}}
\renewcommand{\epsilon}{\varepsilon}
\newcommand{\beeq}{\begin{equation}}
\newcommand{\eneq}{\end{equation}}
\newcommand{\bear}{\begin{array}}
\newcommand{\enar}{\end{array}}
\newcommand{\bema}{\begin{displaymath}}
\newcommand{\enma}{\end{displaymath}}
\newcommand{\beea}{\begin{eqnarray}}
\newcommand{\enea}{\end{eqnarray}}
\title[]{\PPP A variational approach to nonlocal heat equations}
   \author[]{ Edoardo Mainini }
\date{}
 \address{Universit\`{a} degli Studi di Genova, Dipartimento di   Ingegneria Meccanica, Energetica, $\quad$ Gestionale e dei Trasporti (DIME),
  Via all'Opera Pia, 15 - 16145 Genova Italy.}
  \email{ edoardo.mainini@unige.it}
\subjclass{}
\begin{document}
 \maketitle
\begin{abstract} \PPP
We discuss a weighted variational integral approach for nonlocal linear diffusion models with forcing term, providing a selection principle for solutions of elliptic in time regularizations.
 \KKK
\end{abstract}
\begin{center}
\end{center}
\begin{flushleft}
  {\bf AMS Classification Numbers (2020):\,} 35S10, 49J45, 70H30\\
  {\bf Key Words:\,} Calculus of Variations, Linear diffusion, Nonlocal Heat Equation, 
 Weighted Variational Integrals, Weighted inertia energy approach\\
\end{flushleft}
\vskip0.5cm

\section{Introduction}

We consider the following initial value problem
\begin{equation}\label{mainproblem}
\left\{\begin{array}{ll}u'+\mathcal Lu=f\qquad&\mbox{in $(0,+\infty)\times\mathbb R^N$}\\
u(0)=u_0\qquad&\mbox{in $\mathbb R^N$}\end{array}\right.
\end{equation}
where $\mathcal L:\mathrm{Dom}(\mathcal L)\subset L^2(\mathbb R^N)\to L^2(\mathbb R^N)$ is a  self-adjoint Fourier multiplier with symbol $\hat L$, the prime $'$ stands for time derivative, 
and $f$ is a given space-time depending forcing term.\\

Besides the classical heat equation, \eqref{mainproblem} covers general nonlocal diffusion models like the fractional heat equation \cite{BSV,V,V2},
 obtained by letting
\begin{equation}\label{levy}
\mathcal L u(x):=p.v.\int_{\mathbb R^N}\kappa(x-y)(u(x)-u(y))\,dy
\end{equation}
with 
 $$\kappa(x)=c_{N,s}|x|^{-N-2s}, 
 \qquad 0<s<1,$$
 where $c_{N,s}>0$ is a suitable normalization constant. That is,  $\mathcal L=(-\Delta)^s$, the fractional power of the Laplacian operator, characterized in terms of Fourier transform as $$\widehat{(-\Delta)^sv}(\xi)=\hat L(\xi)\hat v(\xi)=|\xi|^{2s}\hat v(\xi),\qquad v\in\mathrm{Dom}(\mathcal L),$$ with $\mathrm{Dom}(\mathcal L)$ given by the fractional Sobolev space $H^{2s}(\mathbb R^N)$. \\

Among the main applications of nonlocal diffusion models, we might also consider $\mathcal L$ in the form \eqref{levy} with symmetric kernel $\kappa\in L^1(\mathbb R^N)$, 
leading to the zeroth order operator $\mathcal L u=Mu-\kappa\ast u$, $M=\int_{\mathbb R^N} u$. In this case,  $\hat L(\xi)=M-\hat \kappa(\xi)$ belongs to $L^\infty(\mathbb R^N)$, $\mathrm{Dom}(\mathcal L)=L^2(\mathbb R^N)$, and in general $\mathcal L$ is not a nonnegative operator. Nonlocal diffusion models of this form appear in biology, population dynamics and several more applications, see  \cite{BBCK,BFRW,BP,CCR,F} and references therein.\\

To the main equation \eqref{mainproblem} we
 associate the weighted variational integral 
\[
\mathcal F_\eps(u)=\int_{0}^{+\infty}\int_{\mathbb R^N}e^{-t/\eps}\left(\frac\eps2|u'|^2+\frac12 u\;\mathcal L u-fu\right)\,dx\,dt
\]
which is naturally related to the elliptic in time  regularization 
\begin{equation}\label{sec}
-\eps u''+ u'+\mathcal L u= f\qquad\mbox{in $(0,+\infty)\times\mathbb R^N$}.
\end{equation}
Indeed, the latter is the Euler-Lagrange equation of functional $\mathcal F_\eps$. Our objective is to show that, under a natural growth assumption on $f$ with respect to the time variable, and assuming that $\hat L$ is bounded from below, functional $\mathcal F_\eps$ admits a unique  minimizer $u_\eps$ among functions $u$ satisfying the initial condition $u(0,\cdot)=u_0(\cdot)$, 
and that $u_\eps$ converge a.e. in space-time to the solution of  \eqref{mainproblem} as $\eps\to0$. This is rigorously stated in Theorem \ref{maintheorem} below, which is the main result.\\

The exponentially weighted variational approach is typically used for hyperbolic problems. In such setting, it was introduced by De Giorgi \cite{DG,DG2}, who conjectured that a
solution to the initial value problem for the following nonlinear wave equation (with an integer $k>1$)
\begin{equation*}
\left\{\begin{array}{ll}u''-\Delta u+k u^{2k-1}=0\qquad&\mbox{in $(0,+\infty)\times\mathbb R^N$}\\
u(0)=u_0\qquad&\mbox{in $\mathbb R^N$}\\u'(0)=u_1\qquad&\mbox{in $\mathbb R^N$},
\end{array}\right.
\end{equation*}
with smooth compactly supported initial data,
is obtained as space-time a.e. limit as $\eps \to 0$ of minimizers of the weighted inertia energy (WIE) functional
\[
\mathcal I_\eps(u)=\int_{0}^{+\infty}\int_{\mathbb R^N}e^{-t/\eps}\left(\frac{\eps^2}2|u''|^2+\frac12|\nabla u|^2+\frac12 |u|^{2k}\right)\,dx\,dt
\]
among functions satisfying the same initial conditions.
This conjecture has been proved  in \cite{ST}, see also \cite{S} for the case of a finite time interval, and then extended to more general hyperbolic problems with dissipation in \cite{ABMS,ST2}, of course also including the classical linear wave equation.  The nontrivial extension to the nonhomogeneous case is treated in \cite{MP2}, see also \cite{TT1,TT2}.   This approach has also been often applied to parabolic problems, see \cite{AST,BDM,BDMS1, BDMS2, M}, see also  \cite{PT} where the authors treat a nonlocal problem like \eqref{mainproblem}. For a general overview we refer to the survey \cite{S2}. \\

The WIE approach can be used for treating ODEs as well, see \cite{LS,LS2,MP}, and here in
 this paper we are going to discuss such a variational approach   for the case of the linear first order equation \eqref{mainproblem}, which we rephrase  as a ODE in Fourier variables. This allows to obtain a new insight on this method and also on the role of the conditions on the forcing term.  Indeed, the unique minimizer for $\mathcal F_\eps$ among functions having initial value $u_0$ is shown to be the unique finite energy solution to \eqref{sec} having initial value $u_0$. Therefore, this method provides a selection principle for solutions of the elliptic in time regularization \eqref{sec}, by posing a condition on their behavior for large time ensuring finiteness of the energy. Such a condition depends on the forcing term, which is required to satisfy a fairly general growth assumption. 
Before stating and proving the main result in Section \ref{section3}, we devote Section \ref{section2} to the finite dimensional version, which is a basic example providing the main elements of the proof.
  
%
%
%

\section{The case of linear ODEs}\label{section2}
In this section we discuss a toy problem, i.e., the general ODE
\begin{equation}\label{ode}
\left\{\begin{array}{ll}y'+Ay=f(t)\qquad&\mbox{in $(0,+\infty)$}\\
y(0)=y_0\end{array}\right.
\end{equation}
where $A\in\mathbb R^{N\times N}$ is a symmetric matrix, $y_0\in\mathbb R^N$ and $f:(0,+\infty)\to\mathbb R^N$ is a forcing term that is assumed to be measurable and square Laplace transformable, i.e., we assume that
\begin{equation}\label{toyassumption}
\int_{0}^{+\infty}e^{-{t/\eps}}|f(t)|^2\,dt <+\infty\qquad \mbox{if $\eps>0$ is small enough}.
\end{equation}
The goal is to check that the unique solution to problem \eqref{ode} can be recovered as limit of minimizers  of functional
\[
\mathcal G_{\eps}(y):=\int_{0}^{+\infty}e^{-t/\eps}\left(\frac\eps2|y'|^2+\frac12\langle Ay,y\rangle-\langle f,y\rangle\right)\,dt,
\]
among functions $y$ such that $y(0)=y_0$,
as $\eps\to0$. The proof of this fact will serve as a reference also for the case of nonlocal heat equations. Indeed, we will check that this approach consists in a selection principle for solutions $y_\eps$ of the second order  approximation
\begin{equation}\label{viscous}
\left\{\begin{array}{ll}\eps y''=y'+Ay-f(t)\qquad&\mbox{in $(0,+\infty)$}\\
y(0)=y_0,\end{array}\right.
\end{equation}
ensuring convergence of $y_\eps$ to the solution of \eqref{ode}. On top of that, we shall check that there is a unique  solution to \eqref{viscous} having  finite $\mathcal G_{\eps}$ energy (for fixed $\eps$). Finiteness of the energy is therefore a necessary and sufficient condition for solutions to \eqref{viscous} to approximate the solution to \eqref{ode} as $\eps\to 0$.  

Let us introduce the functional setting. We let 
\[
\mathcal W_\eps:=\left\{y\in W^{1,1}_{loc}((0,+\infty);\mathbb R^N):\int_{0}^{+\infty}e^{-t/\eps}|y'(t)|^2\,dt<+\infty\right\}.
\]
A function in  $\mathcal W_\eps$ admits a continuous representative that is in $AC([0,T])$ for every $T>0$, and integration by parts along with Young inequality entails
\[\begin{aligned}
\int_0^{T}e^{-t/\eps}|y(t)|^2\,dt&=\left[-\eps e^{-t/\eps}|y(t)|^2\right]_0^T+2\eps\int_{0}^{T}e^{-t/\eps}\langle y(t),y'(t)\rangle\,dt\\
&\le \eps |y(0)|^2+\frac1{2}\int_0^Te^{-t/\eps}|y(t)|^2\,dt+2\eps^2\int_0^Te^{-t/\eps}|y'(t)|^2\,dt,
\end{aligned}\]  
therefore the following estimate holds for every $y\in \mathcal W_\eps$
\begin{equation}\label{fromy'toy}
\frac12\int_0^{+\infty}e^{-t/\eps}|y(t)|^2\,dt\le \eps |y(0)|^2+2\eps^2\int_0^{+\infty}e^{-t/\eps}|y'(t)|^2\,dt.
\end{equation}
In particular, $\mathcal W_\eps$ is the weighted Sobolev space of functions that are in $L^2_w((0,+\infty);\mathbb R^N)$ along with their distributional derivatives, where the weight $w$ is $e^{-t/\eps}\,dt$. In view of \eqref{fromy'toy} and \eqref{toyassumption}, functional $\mathcal G_\eps$ is well defined and finite on every $y\in\mathcal W_\eps$ if $\eps$ is small enough. Still  by using \eqref{toyassumption} and \eqref{fromy'toy}, it is not difficult the check that functional $\mathcal G_\eps$ is bounded from below over the the set $\{y\in\mathcal W_\eps:y(0)=y_0\}$ if $\eps$ is small enough. 

A critical point over $\mathcal W_\eps$ for functional $\mathcal G_\eps$ is naturally  defined as a function $y\in \mathcal W_\eps$ such that
\[\lim_{\delta\to0}\frac{\mathcal G_\eps(y+\delta\varphi)-\mathcal G_\eps(y)}{\delta}=
\int_0^{+\infty}e^{-t/\eps}\left(\eps\langle y'(t),\varphi'(t)\rangle+\langle Ay(t),\varphi(t)\rangle-\langle f(t),\varphi(t)\rangle\right)\,dt\]
vanishes for every $\varphi \in\mathcal W_\eps$. If $y\in \mathcal W_\eps$ is a critical point,  integrating by parts we get
\[
\int_0^{+\infty}e^{-t/\eps}\left\langle-\eps y''(t)+y'(t)+Ay(t)-f(t)\right\rangle\varphi(t)\,dt=0\qquad\mbox{for every $\varphi\in C^1_c((0,+\infty))$.}
\]  Therefore any critical point over $\mathcal W_\eps$ for functional $\mathcal G_\eps$
 is necessarily a solution to $\eps y''=y'+Ay-f$.
 In particular, any minimizer of $\mathcal G_\eps$ over the class $\{y\in\mathcal W_\eps: y(0)=y_0\}$ is necessarily a solution to $\eps y''=y'+Ay-f$ (noticing that perturbations with $C^1_c((0,+\infty))$ functions preserve the initial condition).
  However, we shall check that not every solution to the equation $\eps y''=y'+Ay-f$  has finite energy, i.e., not every solution belongs to $\mathcal W_\eps$. 
We are lead to search for a solution to \eqref{viscous} that belongs to $\mathcal W_\eps$, and we next check that it exists and is unique, thus being the unique minimizer of $\mathcal G_\eps$ over $\{y\in\mathcal W_\eps: y(0)=y_0\}$.

\begin{proposition}\label{propo} Assume \eqref{toyassumption}. Let $A\in\mathbb R^{N\times N}$ be symmetric. Let $y_0\in\mathbb R^N$. For every small enough $\eps>0$, there exists a unique minimizer $y_\eps$ for $\mathcal G_\eps$ in the class $\{y\in\mathcal W_\eps: y(0)=y_0\}$. As $\eps\to 0$, $y_\eps$ converge pointwise in $[0,+\infty)$ and uniformly on bounded intervals to
\[
y(t)=e^{-At}\left(y_0+\int_0^te^{As}f(s)\,ds\right),
\]
i.e., to the unique solution to \eqref{ode}.
\end{proposition}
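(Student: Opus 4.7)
The plan is to reduce the vector problem to $N$ independent scalar ones via the spectral decomposition of $A$, treat each scalar problem by explicit integration, and then pass to the limit in closed form. Writing $A=P^\top D P$ with $P$ orthogonal and $D=\mathrm{diag}(\lambda_1,\ldots,\lambda_N)$, the change of variables $z:=Py$ is an isometry of $\mathbb{R}^N$ that diagonalizes simultaneously the ODE \eqref{ode}, the regularization \eqref{viscous}, the space $\mathcal{W}_\eps$, and the functional $\mathcal{G}_\eps$. It therefore suffices to prove the statement for a single scalar equation $z'+\lambda z=g$ with $\lambda\in\mathbb{R}$ and $g$ satisfying \eqref{toyassumption}.

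For the scalar problem, existence and uniqueness of the minimizer $z_\eps$ in $\{z\in\mathcal{W}_\eps:z(0)=z_0\}$ would follow from the direct method: applying estimate \eqref{fromy'toy} to perturbations $\varphi$ with $\varphi(0)=0$, the second variation of $\mathcal{G}_\eps$ is bounded below by $\eps(1-4\eps|\lambda|)\int_0^{+\infty}e^{-t/\eps}|\varphi'|^2\,dt$, which is strictly positive for $\eps$ small enough. Hence $\mathcal{G}_\eps$ is coercive and strictly convex on the affine subspace $\{z(0)=z_0\}$ of the weighted Hilbert space $\mathcal{W}_\eps$; a minimizing sequence is bounded there, and a weakly convergent subsequence yields the unique minimizer by weak lower semicontinuity. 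The first variation computation already carried out in the excerpt then shows that $z_\eps$ solves the Euler--Lagrange equation $\eps z''=z'+\lambda z-g$.

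An explicit formula for $z_\eps$ is next obtained by order reduction. For $\eps$ small, the characteristic polynomial $\eps r^2-r-\lambda=0$ has two real roots $r_\pm(\eps)=(1\pm\sqrt{1+4\eps\lambda})/(2\eps)$, with $r_-(\eps)\to-\lambda$ and $r_+(\eps)\to+\infty$ satisfying $\eps r_+(\eps)\to 1$ as $\eps\to 0$. The substitution $w:=z'-r_-z$ converts the Euler--Lagrange equation into the first-order linear ODE $w'-r_+w=-g/\eps$, whose general solution contains the growing mode $e^{r_+t}$. The constraint $z\in\mathcal{W}_\eps$ is equivalent to killing this growing mode, and is achieved by the unique choice $w(0)=\eps^{-1}\int_0^{+\infty}e^{-r_+s}g(s)\,ds$, a quantity finite by Cauchy--Schwarz and \eqref{toyassumption}. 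Integrating $w$ and then solving $z'-r_-z=w$ with $z(0)=z_0$ yields the representation
\[
z_\eps(t)=z_0\,e^{r_-t}+\int_0^t e^{r_-(t-\tau)}\left(\frac1\eps\int_0^{+\infty}e^{-r_+u}g(\tau+u)\,du\right)d\tau,
\]
which is readily seen to lie in $\mathcal{W}_\eps$ and hence to be the sought minimizer.

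To pass to the limit $\eps\to 0$, I would exploit that the outer kernel $e^{r_-(t-\tau)}$ converges pointwise to $e^{-\lambda(t-\tau)}$, while the inner kernel $(1/\eps)e^{-r_+u}\,du$ on $(0,+\infty)$ has total mass $1/(\eps r_+)\to 1$ and concentrates at $u=0$, so it acts as an approximate identity at the origin. The inner integral then tends to $g(\tau)$ for a.e. $\tau$, and the full double integral to $\int_0^te^{-\lambda(t-\tau)}g(\tau)\,d\tau$, reproducing the Duhamel formula in the statement after reassembling the eigenmodes via $P^\top$. The main technical obstacle is justifying this approximate-identity step for a merely $L^2_{\mathrm{loc}}$ forcing term $g$; I would handle it by splitting the $u$-integral at a small $\eps$-independent threshold, treating the near-origin contribution by a Lebesgue-point argument for $g$ and controlling the far-field part by an $L^2$ tail estimate uniform in $\tau$ on bounded sets. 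Uniform convergence on bounded intervals of $t$ then follows from the continuous dependence of the limiting kernels on $(t,\tau)$.
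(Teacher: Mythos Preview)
Your approach is correct and shares the paper's backbone---identify the unique finite-energy solution of the Euler--Lagrange equation \eqref{viscous} by killing the exponentially growing mode, then pass to the limit in the explicit formula---but the execution differs in three places. First, you diagonalize $A$ itself and reduce to $N$ decoupled scalar problems, whereas the paper lifts \eqref{viscous} to a first-order system in $\mathbb{R}^{2N}$ and diagonalizes the $2N\times 2N$ matrix $A_\eps$; your route is lighter, since the change of variables commutes with everything and avoids tracking the $\eps$-dependent eigenvectors of $A_\eps$. Second, you add a direct-method existence argument via strict convexity of $\mathcal G_\eps$ on the affine constraint set, which the paper does not use: the paper instead shows directly that among all solutions of \eqref{viscous} exactly one lies in $\mathcal W_\eps$, and that one is automatically the minimizer. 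Third, for the convergence you interpret the inner integral as a one-sided approximate identity, while the paper splits the explicit formula into the pieces $x_\eps$ and $\bar x_\eps$, shows $\bar x_\eps\to 0$ uniformly on $[0,T]$ via the ad hoc Lemma~\ref{tech}, and handles $x_\eps$ by elementary dominated-convergence bounds.

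One point in your sketch needs tightening. The Lebesgue-point argument you invoke gives only $I_\eps(\tau):=\eps^{-1}\int_0^\infty e^{-r_+u}g(\tau+u)\,du\to g(\tau)$ for a.e.\ $\tau$, and ``continuous dependence of the limiting kernels'' does not by itself upgrade this to uniform convergence in $t$ of $\int_0^t e^{r_-(t-\tau)}I_\eps(\tau)\,d\tau$. What you actually need is $I_\eps\to g$ in $L^1(0,T)$ (then the uniform bound $\sup_{[0,T]^2}|e^{r_-(t-\tau)}|$ finishes the job). This $L^1$ convergence does follow from your splitting: the near-origin part is a genuine mollification of $g\chi_{(0,T+\delta)}\in L^1$ and converges in $L^1$ by continuity of translations, while the far-field part is controlled in $L^2(0,T)$ using \eqref{toyassumption} after bounding $\int_u^{T+u}|g|^2\le e^{(T+u)/\eps_0}\int_0^\infty e^{-s/\eps_0}|g|^2\,ds$ and absorbing the growth with the factor $e^{-r_+u}$. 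Stating it this way closes the gap; the paper's Lemma~\ref{tech} is precisely an alternative device for the same purpose.
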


\begin{proof}

Introducing the variable $z:[0,+\infty)\to\mathbb R^{2N}$ defined by $z(t):=(y(t),y'(t))$, \eqref{viscous} is equivalent to
\[z'=A_\eps z+f_\eps,\]
where  the $\mathbb R^{2N\times 2N}$  block matrix $A_\eps$ and the $\mathbb R^{2N}$ vector $f_\eps$ are defined by
\begin{equation}\label{Aepsfeps}A_\eps:=\begin{pmatrix}
 0 & I_N\\ \frac1\eps A & \frac1\eps I_N
 \end{pmatrix},\qquad f_\eps(t):=\begin{pmatrix}
 0 \\  -\frac1\eps f(t)
 \end{pmatrix}
\end{equation}
where $I_N$ denotes the $N\times N$ identity matrix.
Since
\[\det (A_\eps-\lambda I_{2N})=\det\left(\lambda(\lambda-\frac1\eps)I_N-\frac1\eps A\right)=\eps^{-N}\det(\lambda(\eps\lambda-1)I_N-A),\]
we see that $\lambda$ is an eigenvalue of $A_\eps$ if and only if $\lambda(\eps\lambda-1)$ is an eigenvalue of $A$. Therefore, letting $\mu^1\le\mu^2\le\ldots\le \mu^N$ the eigenvalues of $A$ and $4\eps<1/|\mu_1|$, the eigenvalues of $A_\eps$ are real and given by
\begin{equation}\label{eig}
\mu_\eps^i=:\frac{1-\sqrt{1+4\eps\mu^i}}{2\eps},\qquad\bar\mu^i_\eps:=\frac{1+\sqrt{1+4\eps\mu^i}}{2\eps},\qquad i=1,\ldots,N.
\end{equation}
It is not difficult to check that, for every $i=1,\ldots,N$, the eigenvectors respectively associated to $\mu_\eps^i$ and to $\bar\mu^i_\eps$ are the $\mathbb R^{2N}$ vectors
\[h_\eps^i:=\begin{pmatrix}
 h^i \\ \mu_\eps^i \,h^i
 \end{pmatrix},\qquad \bar h_\eps^i:=\begin{pmatrix}
 h^i \\  \bar\mu^i_\eps\, h^i
 \end{pmatrix}
\]
where $h^i\in\mathbb R^N$ is the normalized eigenvector of $A$ associated to the eigenvalue $\mu^i$, and we also define $P\in\mathbb R^{N\times N}$ as the orthogonal matrix whose columns are $h^1,\ldots, h^N$.
Let now $P_\eps\in\mathbb R^{2N\times 2N}$ a  matrix that diagonalizes $A_\eps$, i.e., we let $P_\eps$ the matrix whose columns are $h_\eps^1,\ldots h_\eps^N,\bar h_\eps^1,\ldots \bar h_\eps^N$. 
It can be written as the block matrix
\begin{equation}\label{block}P_\eps=\begin{pmatrix}
 P & P\\ Q_\eps & \bar Q_\eps 
 \end{pmatrix},
\end{equation}
where $Q_\eps$ is the matrix with columns $(\mu_\eps^1h^1,\ldots\mu_\eps^Nh^N)$ and $\bar Q_\eps$ is the matrix with columns $(\bar\mu_\eps^1h^1,\ldots, \bar\mu_\eps^Nh^N)$. Since $\mu_\eps^i\to-\mu^i$ and $\eps\bar\mu_\eps^i\to 1$ as $\eps\to 0$, we deduce that
\begin{equation}\label{matrixlimit}
\lim_{\eps\to 0}\eps Q_\eps=0,\qquad\lim_{\eps\to0}\eps\bar Q_\eps=P.
\end{equation}
The standard substitution $z=P_\eps X$ allows to rephrase  $z'=A_\eps z+f_\eps$ as the linear decoupled system
\begin{equation}\label{deca}
X'=D_\eps X+q_\eps,
\end{equation}
where $D_\eps:=P_\eps^{-1}A_\eps P_\eps$ is the diagonal matrix with eigenvalues $\mu_\eps^1,\ldots\mu_\eps^N,\bar\mu_\eps^1,\ldots,\bar\mu_\eps^N$ and where $q_\eps:=P_\eps^{-1}f_\eps$. 
We will make use of the following notation for solutions  $X_\eps$ to the system \eqref{deca}:  $x_\eps$ is the vector containing  the first $N$ components of $X_\eps$, while $\bar x_\eps$ includes the components of $X_\eps$ from $N+1$ to $2N$, so that $X_\eps=(x_\eps,\bar x_\eps)=(x_\eps^1,\ldots x_\eps^N,\bar x_\eps^1,\ldots\bar x_\eps^N)$, and similarly we let $q_\eps=(g_\eps,\bar g_\eps)=(g_\eps^1,\ldots g_\eps^N,\bar g_\eps^1,\ldots \bar g_\eps^N)$.  
In particular, the relation $q_\eps:=P_\eps^{-1}f_\eps$ and the definition of $f_\eps$ from \eqref{Aepsfeps} imply
\[
-\begin{pmatrix}
 0 \\ f
 \end{pmatrix}=\begin{pmatrix}
 \eps P & \eps P \\  \eps Q_\eps & \eps \bar Q_\eps
 \end{pmatrix}\begin{pmatrix}g_\eps\\\bar g_\eps\end{pmatrix}
\]
so that $\bar g_\eps=-g_\eps$ and taking \eqref{matrixlimit} into account we get
\begin{equation*}
\lim_{\eps\to 0} g_\eps(t)=P^{-1}f(t)\quad \mbox{ for a.e. $t>0$}.
\end{equation*}
The general solution $X_\eps$ to the decoupled system \eqref{deca}, since $q_\eps=(g_\eps,-g_\eps)$, is  given by
\begin{equation}\label{generalsolution}\begin{aligned}
 x_\eps^i(t)&=  
 e^{\mu_\eps^it}\left(x^i_\eps(0)+\displaystyle\int_0^te^{-\mu_\eps^is}g_\eps^i(s)\,ds\right) \qquad\mbox{ for $i=1,\ldots N$}\\
\bar x_\eps^i(t) &=
e^{\bar\mu_\eps^{i}t}\left(\bar x^i_\eps(0)-\displaystyle\int_0^te^{-\bar\mu_\eps^{i}s}g_\eps^{i}(s)\,ds\right) \qquad\mbox{ for $i=1,\ldots N$}
 \end{aligned}
\end{equation}

\smallskip
As  observed at the beginning of this section, if $y_\eps\in\mathcal W_\eps$ is a minimizer of $\mathcal G_\eps$ over $\{y\in\mathcal W_\eps: y(0)=y_0\}$  then $y_\eps$ solves \eqref{viscous}. We claim that there exists  one and only one solution $y_\eps$ to \eqref{viscous} satisfying $y_\eps\in\mathcal W_\eps$, thus trivially coinciding with the unique minimizer of $\mathcal G_\eps$ over $\{y\in\mathcal W_\eps: y(0)=y_0\}$.
Indeed, suppose $y_\eps$ is a solution to \eqref{viscous}. We let as before
$z_\eps=(y_\eps,y'_\eps)$  and $X_\eps=P_\eps^{-1}z_\eps$, so that $X_\eps$ is indeed a solution to \eqref{deca}, thus being one of the functions from  \eqref{generalsolution}. We have from \eqref{block} that \begin{equation}\label{xp}y_\eps=P(x_\eps+\bar x_\eps),\end{equation} therefore $|y_\eps|^2=|x_\eps+\bar x_\eps|^2$ since $P$ is orthogonal.
On the other hand, the condition $y_\eps\in \mathcal W_\eps$ implies
\[
\int_0^{+\infty}e^{-t/\eps}|y_\eps(t)|^2\,dt<+\infty
\]
in view of \eqref{fromy'toy}. Therefore, a necessary condition on 
  $(x_\eps,\bar x_\eps)$ in order to have $y_\eps\in\mathcal W_\eps$ is
\begin{equation}\label{xbarx}
\int_0^{+\infty}e^{-t/\eps}(x_\eps^i(t)^2+\bar x_\eps^i(t)^2)\,dt<+\infty \qquad \mbox{for every $i=1,\ldots,N$},
\end{equation}
where the condition involving $\bar x_\eps$, recalling \eqref{generalsolution} and \eqref{eig}, requires that for every $i=1,\ldots,N$
\begin{equation}\label{newsol}
\int_0^{+\infty}e^{\frac{\sqrt{1+4\eps\mu^i}}{\eps}t}\left|\bar x^i_\eps(0)-\int_0^te^{-\bar \mu_\eps^i s}g_\eps^i(s)\,ds\right|^2\,dt<+\infty.
\end{equation}
Since $q_\eps =P_\eps^{-1}f_\eps$, $q_\eps=(g_\eps,-g_\eps)$ and $f_\eps=-(0,f/\eps)$, and since $\bar\mu_\eps^i\ge1/(2\eps)$, condition \eqref{toyassumption} entails that the map $s\mapsto e^{-\bar\mu_\eps^i s}g_\eps^i(s)$ is in $L^1(0,+\infty)$ for every small enough $\eps$, therefore  we have the finite limit
\[
\lim_{t\to+\infty}\int_0^te^{-\bar \mu_\eps^i s}g_\eps^i(s)\,ds=\int_0^{+\infty} e^{-\bar \mu_\eps^i s}g_\eps^i(s)\,ds
\]
and thus
a necessary condition for the validity of \eqref{newsol} is
\begin{equation}\label{nec}
\bar x^i_\eps(0)=\int_0^{+\infty}e^{-\bar \mu_\eps^i s}g_\eps^i(s)\,ds\qquad\mbox{for every $i=1,\ldots N$,}
\end{equation}
and we next check that it is also a sufficient condition for obtaining \eqref{newsol}.
Indeed, under the validity of such a condition, for every $i=1,\ldots,N$ we have  by H\"older inequality
\[\begin{aligned}
\int_0^{+\infty}e^{-t/\eps}\bar x^i_\eps(t)^2\,dt&=\int_{0}^{+\infty}e^{\frac{\sqrt{1+4\eps\mu^i}}{\eps}t}\left|\int_t^{+\infty}e^{-\bar\mu_\eps^i s}g_\eps^i(s)\,ds\right|^2\,dt\\&\le \int_{0}^{+\infty}e^{\frac{\sqrt{1+4\eps\mu^i}}{\eps}t}\int_t^{+\infty}e^{-\bar\mu_\eps^is}\,ds\int_t^{+\infty}e^{-\bar\mu_\eps^is}|g_\eps^i(s)|^2\,ds\,dt\\&=\frac1{\bar\mu_\eps^i}\int_0^{+\infty}e^{\frac{\sqrt{1+4\eps\mu^i}}{\eps}t}e^{-\bar\mu_\eps^i t}\int_t^{+\infty}e^{-\bar\mu_\eps^is}|g_\eps^i(s)|^2\,ds\,dt,
\end{aligned}\]
hence by recalling the definition of $\bar \mu_\eps^i$ from \eqref{eig} and the property $\bar\mu_\eps^i\ge 1/(2\eps)$, and  since $|g_\eps^i(s)|\le |P_\eps^{-1}||f(s)|/\eps$ for a.e. $s>0$ (here the matrix norm is the operator norm), we deduce
\[\begin{aligned}
\int_0^{+\infty}e^{-t/\eps}\bar x^i_\eps(t)^2\,dt&\le \frac{1}{\bar\mu_\eps^i}\int_0^{+\infty}e^{-t/(2\eps)}\int_t^{+\infty}e^{-s/(2\eps)}|g_\eps^i(s)|^2\,ds\,dt\\&=\frac{2\eps}{\bar\mu_\eps^i}\int_0^{+\infty}e^{-s/(2\eps)}|g_\eps^i(s)|^2\,ds\le\frac{2|P_\eps^{-1}|}{\eps\bar\mu_\eps^i}\int_0^{+\infty}e^{-s/(2\eps)}|f(s)|^2\,ds<+\infty
\end{aligned}\]
for every small enough $\eps>0$,
thanks to \eqref{toyassumption}. On the other hand, the validity of \eqref{xbarx}, for the part involving $x_\eps$, is obtained from \eqref{eig} and from the elementary estimate $\mu_\eps^i\le -2(\mu^1\wedge0)$, since H\"older inequality yields
\[\begin{aligned}
&\int_0^{+\infty}e^{-t/\eps}x^i_\eps(t)^2\,dt\le\int_0^{+\infty}e^{-t/\eps}e^{2\mu_\eps^it}\left|x^i_\eps(0)+\int_0^te^{-\mu_\eps^is}g_\eps^i(s)\,ds\right|^2\,dt\\
&\qquad\le \frac{2\eps x^i(0)^2}{\sqrt{1+4\eps\mu^i}}+2\int_0^{+\infty}te^{-t/\eps}\int_0^te^{2\mu_\eps^i(t-s)}|g_\eps^i(s)|^2\,ds\,dt\\
&\qquad\le \frac{2\eps x^i(0)^2}{\sqrt{1+4\eps\mu^i}}+2\int_0^{+\infty}te^{-t/(2\eps)}e^{-4(\mu^1\wedge0)t}\int_0^te^{-s/(2\eps)}|g_\eps^i(s)|^2\,ds\,dt\\
&\qquad\le \frac{2\eps x^i(0)^2}{\sqrt{1+4\eps\mu^i}}+\frac{8|P_\eps^{-1}|}{(1+4(\mu_1\wedge0)\eps)^2}\int_0^{+\infty}e^{-s/(2\eps)}|f(s)|^2\,ds\,dt<+\infty
\end{aligned}\]
for every small enough $\eps>0$, where finiteness of the integral is again due to \eqref{toyassumption}. We have shown that \eqref{nec} is a necessary and sufficient condition for having $y_\eps\in\mathcal W_\eps$, and it identifies a unique solution of \eqref{viscous}, because once $\bar x_\eps(0)$ is assigned the relation \eqref{xp} at $t=0$ identifies uniquely the vector $x_\eps(0)$ as the solution to the linear system \begin{equation}\label{Pxy}Px_\eps(0)=y_0-P\bar x_\eps(0).\end{equation} The claim is proven, and thus for every fixed small enough $\eps>0$ there exists a unique minimizer $y_\eps\in\mathcal W_\eps$ for $\mathcal G_\eps$ over  $\{y\in\mathcal W_\eps: y(0)=y_0\}$, and it is characterized as the unique solution of \eqref{viscous} having finite $\mathcal G_\eps$ energy. Its expression is  \eqref{xp}, with
\begin{equation}\label{allsolutions}\begin{aligned}
 x_\eps^i(t)&=  
 e^{\mu_\eps^it}\left(x_\eps^i(0)+\displaystyle\int_0^te^{-\mu_\eps^is}g_\eps^i(s)\,ds\right) \qquad\mbox{ for $i=1,\ldots N,$}\\
\bar x_\eps^i(t) &=
e^{\bar\mu_\eps^{i}t}\displaystyle\int_t^{+\infty}e^{-\bar\mu_\eps^{i}s}g_\eps^{i}(s)\,ds \qquad\mbox{ for $i=1,\ldots N$}
 \end{aligned}
\end{equation}
 and with $x_\eps(0)=P^{-1}(y_0-P\bar x_\eps(0))$ and $\bar x_\eps(0)$ given by \eqref{nec}.
 
 \smallskip
 
 Let us eventually check that such solution $y_\eps$ converges uniformly on compact intervals to the unique solution of \eqref{ode}.
We first recall that due to $\mathbb R^{2N}\ni q_\eps=P_\eps^{-1}f_\eps$, where $f_\eps=-(0,f/\eps)$ and $q_\eps=(g_\eps,-g_\eps)$, by \eqref{block} we have $g_\eps=(\eps\bar Q_\eps-\eps Q_\eps)^{-1}f$, and since $\eps Q_\eps\to 0$, $\eps\bar Q_\eps\to P$, and $|P|=|P^{-1}|=1$, we have, for a.e. $t>0$,
$|g_\eps(t)|\le|(\eps\bar Q_\eps-\eps Q_\eps)^{-1}||f(t)|\le 2|f(t)|$  (having chosen $\eps$ small enough).  
We have as a consequence, similarly as before by H\"older inequality and since $\bar\mu_\eps^i\ge 1/(2\eps)$
\begin{equation}\label{buona}
\begin{aligned}
&\sup_{t\in[0,T]}\left|\int_t^{+\infty}e^{-\bar\mu_\eps^i(s-t)}g_\eps^i(s)\,ds\right|^2\le 4\sup_{t\in[0,T]}\int_t^{+\infty}e^{(t-s)/(2\eps)}\,ds\int_t^{+\infty}e^{(t-s)/(2\eps)}|f(s)|^2\,ds\\&
\qquad\le 8\eps \sup_{t\in[0,T]}\int_t^{2T}e^{(t-s)/(2\eps)}|f(s)|^2\,ds+8\eps \sup_{t\in[0,T]}\int_{2T}^{+\infty}e^{(t-s)/(2\eps)}|f(s)|^2\,ds\\
&\qquad\le 8\eps\sup_{t\in[0,2T]}\int_t^{2T}e^{(t-s)/(2\eps)}|f(s)|^2\,ds+8\eps\int_{2T}^{+\infty}e^{-s/(4\eps)}|f(s)|^2\,ds\\
\end{aligned}
\end{equation}
where the second term the right hand side vanishes as $\eps\to0$ by dominated convergence due to \eqref{toyassumption}, and the first term in the right hand side vanishes as well by invoking Lemma \ref{tech} below. Therefore $\bar x_\eps(t)\to 0$ as $\eps\to0$, uniformly on bounded intervals. In particular $\bar x_\eps(0)\to 0$ as $\eps\to0$ so that indeed from the relation \eqref{Pxy} we see that $x_\eps(0)\to P^{-1}y_0$. 
As a consequence, it is readily seen that $e^{\mu_\eps^it}x_\eps^i(0)\to e^{-\mu^i}t(P^{-1}y_0)^i$ as $\eps\to0$ for every $i=1,\ldots, N$, uniformly on bounded time intervals. On the other hand by H\"older inequality
\begin{equation}\label{buona2}\begin{aligned}
&\sup_{t\in[0,T]}\left|\int_0^t e^{\mu_\eps^i(t-s)}|g_\eps(s)-P^{-1}f(s)|\,ds\right|^2\\&\qquad\le Te^{-4(\mu^1\wedge0)T} \int_0^T|g_\eps(s)-P^{-1}f(s)|^2\,ds\\&\qquad\le Te^{-4(\mu^1\wedge0)T} |(\eps\bar Q_\eps-\eps Q_\eps)^{-1}-P^{-1}|\int_0^T|f(s)|^2\,ds 
\end{aligned}\end{equation}
where the right hand side vanishes as $\eps\to0$ since \eqref{toyassumption} implies $f\in L^2(0,T)$ for every $T>0$ and since $\eps Q_\eps\to0$, $\eps\bar Q_\eps\to P$. Since the right hand sides of both \eqref{buona} and \eqref{buona2} are vanishing as $\eps\to0$ from \eqref{allsolutions} and \eqref{Pxy} we see that  uniformly on bounded time intervals $\bar x_\eps\to0$ and,  for every $i=1,\ldots,N$, $x_\eps^i$ converge uniformly on bounded time intervals to
\[
x^i(t):=e^{-\mu^it}\left(x_0^i+\int_0^te^{\mu^is}(P^{-1}f(s))^i\,ds\right), \qquad x_0^i:=(P^{-1}y_0)^i.
\]
Thus the  limit $y$ of $y_\eps$ is given by $Px$, thanks to \eqref{xp}, where
 $x$ is  the solution to $x'=Dx+P^{-1}f$ with initial datum $x(0)=x_0=P^{-1}y_0$, where $D\in \mathbb R^{N\times N}$ is the diagonal matrix with eigenvalues $-\mu^1,\ldots,-\mu^N$. Since  the columns of the orthogonal matrix $P$ are the eigenvectors of $-A$, there holds $D=-P^{-1}AP$.
Going back to the variable $y$, we see that
\[
y'=Px'=P(Dx+P^{-1}f)=PDP^{-1}y+f=-Ay+f,\qquad y(0)=Px(0)=y_0,
\]
 so $y$ is indeed the solution to \eqref{ode}.
\end{proof}

The following simple lemma has been used through the proof of Proposition \ref{propo}.
\begin{lemma}\label{tech} Let $T\in(0,+\infty)$ and $g\in L^1(0,T)$. For $\eps>0$ and $t\in[0,T]$, let
\[
G_\eps(t):=e^{t/\eps}\int_t^Te^{-s/\eps}|g(s)|\,ds.
\]
Then $G_\eps\to 0$ in $C^0([0,T])$ as $\eps\to0$.
\end{lemma}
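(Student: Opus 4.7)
The plan is to exploit density of bounded (or continuous) functions in $L^1(0,T)$, splitting the integral so that the unbounded $L^1$ tail is controlled by the $L^1$ norm (using $e^{(t-s)/\eps}\le 1$ for $s\ge t$), while the bounded part is controlled by the factor $\eps$ coming from the explicit integration of $e^{-s/\eps}$.

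More precisely, I would first rewrite $G_\eps(t)=\int_t^T e^{(t-s)/\eps}|g(s)|\,ds$, so that the exponential weight in the integrand is pointwise bounded by $1$ on $[t,T]$. This gives immediately the pointwise estimate $G_\eps(t)\le\|g\|_{L^1(0,T)}$, and by dominated convergence $G_\eps(t)\to 0$ for each fixed $t<T$ (while $G_\eps(T)=0$); but pointwise convergence is not enough, so one needs the decomposition step.

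Given $\eta>0$, pick a bounded function $g_\eta\in L^\infty(0,T)$ (for instance a continuous representative obtained by mollification/truncation) with $\|g-g_\eta\|_{L^1(0,T)}<\eta$, and set $M_\eta:=\|g_\eta\|_{L^\infty(0,T)}$. Then for every $t\in[0,T]$,
\[
G_\eps(t)\le \int_t^T e^{(t-s)/\eps}|g(s)-g_\eta(s)|\,ds+M_\eta\int_t^T e^{(t-s)/\eps}\,ds\le \|g-g_\eta\|_{L^1(0,T)}+M_\eta\,\eps\bigl(1-e^{(t-T)/\eps}\bigr),
\]
so that $\sup_{t\in[0,T]}G_\eps(t)\le \eta+M_\eta\,\eps$. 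Letting $\eps\to 0$ yields $\limsup_{\eps\to 0}\|G_\eps\|_{C^0([0,T])}\le\eta$, and since $\eta>0$ is arbitrary the conclusion follows.

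The only conceivable obstacle is the fact that $g$ is merely $L^1$, and not bounded, which is precisely why the naive dominated-convergence argument delivers only pointwise (and not uniform) convergence; the density-plus-splitting trick above circumvents this in a completely standard way.
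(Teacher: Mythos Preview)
Your proof is correct, and it is in fact simpler than the paper's own argument. Both proofs rest on the same splitting idea---separate a bounded piece, for which explicit integration of $e^{(t-s)/\eps}$ yields a factor $\eps$, from an $L^1$-small remainder, for which the trivial bound $e^{(t-s)/\eps}\le 1$ suffices---but they implement it differently. You use the density of $L^\infty$ in $L^1$: fix $\eta$, choose a bounded approximant $g_\eta$ once, and obtain $\sup_t G_\eps(t)\le \eta+M_\eta\,\eps$. The paper instead makes the truncation level $\eps$-dependent, setting $g_\eps=(p_\eps-1)\wedge|g|$ with $p_\eps=\sqrt{-\log\eps}\to\infty$, and then controls the bounded part via H\"older's inequality with the $\eps$-dependent conjugate exponents $(p_\eps,p_\eps')$, checking that $(\eps/p_\eps)^{1/p_\eps}\|g_\eps\|_{L^{p_\eps'}}\to 0$ through a short but nontrivial computation. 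Your argument avoids this entirely and is the standard ``$\eta/2$'' approximate-identity proof; the paper's route is self-contained in the sense that the truncation is explicit, but it buys nothing extra here and is considerably more involved.
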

\begin{proof} The result is trivial if $|g|\equiv 1$. By possibly switching from $|g|$ to $|g|+1$, it is therefore not restrictive to assume that $|g|\ge1$.
Let $\eps\in(0,e^{-4})$, let $p_\eps:=\sqrt{-\log\eps}$, let $p_\eps':=p_\eps/(p_\eps-1)$ and let moreover $g_\eps:=(p_\eps-1)\wedge |g|$. Clearly $g_\eps\ge1$ and $g_\eps\to |g|$ in $L^1(0,T)$ as $\eps\to0$. Therefore, after having noticed  that
\[\begin{aligned}
e^{t/\eps}\int_t^Te^{-s/\eps}|g(s)|\,ds&=e^{t/\eps}\int_t^Te^{-s/\eps}g_\eps(s)\,ds+e^{t/\eps}\int_t^Te^{-s/\eps}(|g(s)|-g_\eps(s))\,ds\\&\le e^{t/\eps}\int_t^Te^{-s/\eps}g_\eps(s)\,ds+\int_0^T(|g(s)|-g_\eps(s))\,ds,
\end{aligned}\] we are left to prove that
\begin{equation}\label{tseps}\sup_{t\in[0,T]}e^{t/\eps}\int_t^Te^{-s/\eps}g_\eps(s)\,ds\to0\end{equation}
as $\eps\to0$. We have by H\"older inequality
\[e^{t/\eps}\int_t^Te^{-s/\eps}g_\eps(s)\,ds\le e^{t/\eps}\left(\int_t^T e^{-sp_\eps/\eps}\,ds\right)^{1/p_\eps}\|g_\eps\|_{L^{p_\eps'}(0,T)}
\le \left(\frac\eps{p_\eps}\right)^{1/p_\eps}\,\|g_\eps\|_{L^{p_\eps'}(0,T)}
\]
and we claim that the right hand side goes to $0$ as $\eps\to0$, thus proving \eqref{tseps}. Since $(1/p_\eps)^{1/p_\eps}\to 1$ and $p_\eps'\to 1$, and since
\[
\frac{\eps^{p_\eps'/p_\eps}}T\int_0^Tg_\eps(s)^{p_\eps'}\,ds\le \eps^{p_\eps'/p_\eps}(p_\eps-1)^{p_\eps'}=e^{-p_\eps p_\eps'}(p_\eps-1)^{p_\eps'}=\,e^{-p_\eps^2/(p_\eps-1)}\,(p_\eps-1)^{p_\eps/(p_\eps-1)}\to0,
\] 
the claim follows, so that \eqref{tseps} holds. The proof is concluded.
\end{proof}

\section{Nonlocal heat equations}\label{section3}

In this section, the operator $\mathcal L:\mathrm{Dom}(\mathcal L)\subset L^2(\mathbb R^N)\to L^2(\mathbb R^N)$  appearing in equation \eqref{mainproblem} is assumed to be a self-adjoint Fourier multiplier, with  $\mathcal S(\mathbb R^N)\subset\mathrm{Dom}(\mathcal L)$. We also assume  that its symbol $\hat L$  is bounded from below over $\mathbb R^N$, i.e.
\begin{equation}\label{boundfrombelow}
\mbox{there exists $K\le 0$ such that }\quad \hat L(\xi)\ge K\;\mbox{ for a.e. $\xi\in\mathbb R^N$}
\end{equation}
 We further assume that the forcing term $f\in L^1_{loc}((0,+\infty)\times\mathbb R^N)$ satisfies the following property: the map $t\mapsto\|f(t,\cdot)\|_{L^2(\mathbb R^N)}^2$ is Laplace transformable, i.e., 
\begin{equation}\label{transformability}\mbox{for every small enough $\eps>0$ there holds
}\;\;\;
\int_0^{+\infty}e^{-t/\eps}\|f(t,\cdot)\|_{L^2(\mathbb R^N)}^2\,dt<+\infty.
\end{equation}

We define the functional setting for the main result. Let 
\[
V_{\mathcal L}:=\left\{u\in L^2(\mathbb R^N): \int_{\mathbb R^N}|\hat L(\xi)||\hat u(\xi)|^2\,d\xi<+\infty\right\}.
\]
The space $V_{\mathcal L}$ is the domain of the  quadratic form $(u,v)\mapsto\frac12\int_{\mathbb R^N}u\,\mathcal Lv\,dx$  associated with $\mathcal L$ and appearing in functional $\mathcal F_\eps$. It is normed by
\[
\|v\|_{V_{\mathcal L}}:=\left(\int_{\mathbb R^N}(1+|\hat L(\xi)|)|v(\xi)|^2\,d\xi\right)^{1/2}.
\]
We further define the weighted Sobolev space 
$$
H_\eps:=\left\{u\in W^{1,1}_{loc}((0,+\infty);L^2(\mathbb R^N)): \int_{0}^{+\infty}e^{-t/\eps}\|u'(t,\cdot)\|^2_{L^2(\mathbb R^N)}<+\infty\right\}.
$$
We notice that for every $u\in H_\eps$  there holds by Young inequality
\begin{equation*}\begin{aligned}
&\int_{0}^{T}e^{-t/\eps}\|u(t,\cdot)\|^2_{L^2(\mathbb R^N)}\,dt\le \left[-\eps e^{-t/\eps}\|u(t,\cdot)\|^2_{L^2(\mathbb R^N)}\right]_0^T+\eps\int_0^Te^{-t/\eps}\frac{d}{dt}\|u(t,\cdot)\|^2_{L^2(\mathbb R^N)}\,dt\\
&\qquad\le\eps\|u(0,\cdot)\|^2_{L^2(\mathbb R^N)}+\eps\int_0^{T} e^{-t/\eps}\left(\frac1{2\eps}\|u(t,\cdot)\|^2_{L^2(\mathbb R^N)}+2\eps\|u'(t,\cdot)\|^2_{L^2(\mathbb R^N)}\right)\,dt
\end{aligned}
\end{equation*}
so that
\begin{equation}\label{fromu'tou}\begin{aligned}
&\frac12\int_{0}^{+\infty}e^{-t/\eps}\|u(t,\cdot)\|^2_{L^2(\mathbb R^N)}\,dt\le 
\eps\|u(0,\cdot)\|^2_{L^2(\mathbb R^N)}+2\eps^2\int_0^{+\infty} e^{-t/\eps}\|u'(t,\cdot)\|^2_{L^2(\mathbb R^N)}\,dt.
\end{aligned}
\end{equation}
We finally define the linear space
\[
H^{\mathcal L}_\eps:=\left\{u \in H_\eps: \int_0^{+\infty}e^{-t/\eps}\int_{\mathbb R^N}|\hat L(\xi)||\hat u(t,\xi)|^2\,d\xi\,dt<+\infty\right\}.
\]
Note that here and in the following the hat $\hat{}$ symbol denotes Fourier transform in the space variable.
For every $u\in H^{\mathcal L}_\eps$ we have by Plancherel theorem
\[
\mathcal F_\eps(u)=\mathcal J_\eps(\hat u):=\int_{0}^{+\infty}e^{-t/\eps}\int_{\mathbb R^N}\left(\frac\eps2 |\hat u(t,\xi)|^2+\frac12\,\hat L(\xi)|\hat u(t,\xi)|^2-\hat f(t,\xi)\overline{\hat u(t,\xi)}\right)\,d\xi\,dt.
\]
The Gateaux derivative of $\mathcal F$ at $u\in H^{\mathcal L}_\eps$ is defined as the linear operator
\begin{equation}\label{derivative}\begin{aligned}
d\mathcal F_\eps(u)[\varphi]:&=\lim_{\delta\to0}\frac{\mathcal F_\eps(u+\delta\varphi)-\mathcal F_\eps(u)}{\delta}\\&=\int_{0}^{+\infty}e^{-t/\eps}\int_{\mathbb R^N}\left(\eps u'\varphi'+\varphi\mathcal L u-f\varphi\right)\,dx\,dt,\qquad\varphi \in H^{\mathcal L}_\eps,
\end{aligned}
\end{equation}
where we used the fact that $\mathcal L$ is self-adjoint. Similarly as above, making use of Plancherel theorem, the Gateaux derivative of $\mathcal F$ is related to the Gateaux derivative of $\mathcal J$ by
\[
d\mathcal J_\eps(\hat u)[\hat \varphi]=d \mathcal F_\eps(u)[\varphi]\qquad\mbox{for every $u\in H_\eps^\LL$ and every $\varphi\in H_\eps^\LL.$}
\]
A critical point of functional $\mathcal F_\eps$ over $H^{\mathcal L}_\eps$ is naturally defined 
as a function
$u\in H^{\mathcal L}_\eps$ such that   $d \mathcal F(u)[\varphi]=0\;\;\mbox{ for every }\;\;\varphi\in H^{\mathcal L}_\eps.$
Before stating the main result, let us preliminarily obtain a basic estimate for the solution to \eqref{mainproblem}.
\begin{proposition} Let $u_0\in V_{\mathcal L}$. 
There exists a unique $u \in C^0([0,+\infty);L^2(\mathbb R^N))$ solving \eqref{mainproblem} and it is given by the inverse Fourier transform of
\begin{equation}\label{original}
\hat u(t,\xi)=e^{-\hat L(\xi)t}\left(\hat u_0(\xi)+\int_0^te^{\hat L(\xi)s}\hat f(s,\xi)\,ds\right).
\end{equation}
Moreover, $u\in C^0([0,+\infty);V_{\mathcal L})$ and for every $T>0$
\[\sup_{t\in[0,T]}\|u(t,\cdot)\|^2_{V_{\mathcal L}}\le 2(1-K)(T+1)e^{-2KT}\left(\|u_0\|^2_{V_{\mathcal L}}+\int_0^T\|f(s,\cdot)\|^2_{L^2(\mathbb R^N)}\right),
\]
where $K\le0$ is the constant from \eqref{boundfrombelow}.
\end{proposition}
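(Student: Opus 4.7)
The approach is to reduce the problem to a family of linear first-order ODEs in the frequency variable and solve them explicitly via the Duhamel formula. The pointwise-in-$\xi$ expression \eqref{original} then furnishes both existence and uniqueness; the quantitative bound will come from a careful treatment of the smoothing factor $e^{-\hat L(\xi)t}$ against the weight $1+|\hat L(\xi)|$.

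Applying the spatial Fourier transform to \eqref{mainproblem} and using that $\mathcal L$ is a multiplier with symbol $\hat L$, for a.e.\ $\xi\in\mathbb R^N$ one gets the scalar ODE $\partial_t\hat u(t,\xi)+\hat L(\xi)\hat u(t,\xi)=\hat f(t,\xi)$ with initial datum $\hat u_0(\xi)$. Assumption \eqref{transformability} gives $f\in L^2_{\mathrm{loc}}((0,+\infty);L^2(\mathbb R^N))$, hence $\hat f(\cdot,\xi)\in L^1_{\mathrm{loc}}((0,+\infty))$ for a.e.\ $\xi$, so that variation of parameters produces precisely \eqref{original}. Uniqueness in $C^0([0,+\infty);L^2(\mathbb R^N))$ follows at once, since any two solutions must have Fourier transforms solving the same scalar ODE for a.e.\ $\xi$.

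The core of the proof is the quantitative estimate. Starting from \eqref{original}, write $|\hat u(t,\xi)|^2\le 2e^{-2\hat L(\xi)t}|\hat u_0(\xi)|^2+2|I(t,\xi)|^2$, where $I(t,\xi):=\int_0^t e^{-\hat L(\xi)(t-s)}\hat f(s,\xi)\,ds$. Multiplying by $1+|\hat L(\xi)|$ and integrating in $\xi$, the first summand contributes at most $2e^{-2Kt}\|u_0\|^2_{V_{\mathcal L}}$ in view of \eqref{boundfrombelow}. For $|I|^2$, I would apply Cauchy--Schwarz with weight $e^{-\hat L(\xi)(t-s)}$ to get
\[
|I(t,\xi)|^2\le\Bigl(\int_0^t e^{-\hat L(\xi)(t-s)}\,ds\Bigr)\Bigl(\int_0^t e^{-\hat L(\xi)(t-s)}|\hat f(s,\xi)|^2\,ds\Bigr),
\]
and then establish the uniform-in-$\xi$ bound
\[
(1+|\hat L(\xi)|)\int_0^t e^{-\hat L(\xi)(t-s)}\,ds\le (1-K)(t+1)e^{-Kt}
\]
by a case analysis: if $\hat L(\xi)\ge 0$ the integral equals $(1-e^{-\hat L t})/\hat L\le\min(t,1/\hat L)$, so its product with $1+\hat L$ is at most $t+1$; if $K\le\hat L(\xi)<0$ the integral equals $(e^{|\hat L|t}-1)/|\hat L|\le t\,e^{|\hat L|t}\le t\,e^{-Kt}$, while $1+|\hat L|\le 1-K$. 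Combining this with Fubini and the pointwise bound $e^{-\hat L(\xi)(t-s)}\le e^{-Kt}$ on the second factor yields
\[
\int_{\mathbb R^N}(1+|\hat L(\xi)|)\,|I(t,\xi)|^2\,d\xi\le (1-K)(t+1)e^{-2Kt}\int_0^t\|f(s,\cdot)\|^2_{L^2(\mathbb R^N)}\,ds,
\]
and the stated estimate follows by taking the supremum over $t\in[0,T]$ and absorbing constants using $(1-K)(T+1)\ge 1$. Continuity of $t\mapsto u(t,\cdot)$ into both $L^2(\mathbb R^N)$ and $V_{\mathcal L}$ is then obtained from \eqref{original} by dominated convergence, the integrable majorant being supplied by the same estimate applied to differences.

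The main obstacle is precisely the uniform bound on $(1+|\hat L(\xi)|)\int_0^t e^{-\hat L(\xi)(t-s)}\,ds$: for large positive $\hat L(\xi)$ one uses the $1/\hat L$ decay of the time integral to offset the factor $|\hat L|$ in the weight (this is the parabolic gain of one $\mathcal L$-derivative that explains why the estimate only requires $f$ in $L^2$, not in $V_{\mathcal L}$), while for negative $\hat L(\xi)$ the lower bound $K$ caps simultaneously $|\hat L|$ and the exponential $e^{|\hat L|t}$. Everything else reduces to elementary Fourier-side manipulations.
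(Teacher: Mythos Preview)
Your proposal is correct and follows essentially the same approach as the paper: both reduce to the scalar ODE in Fourier variables, use the Duhamel formula \eqref{original}, apply Cauchy--Schwarz with weight $e^{-\hat L(\xi)(t-s)}$ to the forcing integral, and establish the key pointwise bound $(1+|\hat L(\xi)|)\int_0^t e^{-\hat L(\xi)(t-s)}\,ds\le (1-K)(T+1)e^{-KT}$ by the same case analysis on the sign of $\hat L(\xi)$. The only differences are cosmetic (e.g.\ you phrase the $\hat L\ge 0$ case via $\min(t,1/\hat L)$ whereas the paper uses $1-e^{-x}\le x$), and your remarks on continuity via dominated convergence and on the parabolic gain-of-regularity interpretation are apt additions.
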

\begin{proof} The Fourier transform in space of the equation in \eqref{mainproblem} is $\hat u'+\hat L\,\hat u=\hat f$, with the initial condition $\hat u(0,\xi)=\hat u_0(\xi)$. Therefore, $\hat u$ takes the form \eqref{original}. Moreover, taking advantage of \eqref{boundfrombelow} we have
the following estimate for a.e. $\xi\in\mathbb R^N$
\[
\sup_{t\in [0,T]}(1+|\hat L(\xi)|)\int_0^te^{\hat L(\xi)(s-t)}\,ds\le (1-K)(T+1)e^{-KT}.
\]
Indeed, if $\hat L(\xi)\le 0$ this is immediately obtained by using $|\hat L(\xi)|=-\hat L(\xi)\le -K$, else if $\hat L(\xi)>0$ this is obtained by noticing that 
\[
\sup_{t\in[0,T]}(1+|\hat L(\xi)|)\int_0^t e^{-\hat L(\xi)(t-s)}\,ds=\sup_{t\in [0,T]}\left(\frac{1-e^{-\hat L(\xi)t}}{\hat L(\xi)}+(1-e^{-\hat L(\xi)t})\right)\le T+1 
\]
since $1-e^{-x}\le x$ for every $x>0$.
Therefore by H\"older inequality
\begin{equation}\label{firstdominant}\begin{aligned}
&\sup_{t\in[0,T]}(1+|\hat L(\xi)|)\left|\int_0^t e^{\hat L(\xi)(s-t)}\hat f(s,\xi)\right|^2\,ds\\&\qquad\le\sup_{t\in[0,T]} (1+|\hat L(\xi)|)\int_{0}^te^{\hat L(\xi)(s-t)}\,ds\int_0^te^{\hat L(\xi)(s-t)}|\hat f(s,\xi)|^2\,ds\,\\
&\qquad \le (1-K)(T+1)e^{-2KT}\int_0^T|\hat f(s,\xi)|^2\,ds\qquad\mbox{for a.e. $\xi\in\mathbb R^N$}.
\end{aligned}
\end{equation}
By using \eqref{boundfrombelow} again and by Fubini theorem we deduce
\[\begin{aligned}
\sup_{t\in[0,T]}\|\hat u(t,\cdot)\|^2_{V_{\mathcal L}}&\le2\sup_{t\in[0,T]}\int_{\mathbb R^N} (1+|\hat L(\xi)|)e^{-2\hat L(\xi)t}|\hat u_0(\xi)|^2\,d\xi\\&\qquad+2\sup_{t\in[0,T]}\int_{\mathbb R^N}(1+|\hat L(\xi)|)\left|\int_0^te^{\hat L(\xi)(s-t)}\hat f(s,\xi)\,ds\right|^2\,d\xi\\
&\le 2e^{-2KT}\|u_0\|^2_{V_{\mathcal L}}+2(1-K)(T+1)e^{-2KT}\int_0^T\|\hat f(s,\cdot)\|^2_{L^2(\mathbb R^N)}\,ds
\end{aligned}\]
thus concluding the proof.
\end{proof}

We now state the main result, proving that the solution to problem \eqref{mainproblem}, with $u_0\in V_{\mathcal L}$, is the limit of minimizers of functional $\mathcal F_\eps$ over $\{u\in H_\eps^{\mathcal L}:u(0,x)=u_0(x)\mbox{ for a.e. $x\in\mathbb R^N$}\}$ as $\eps\to 0$.

\begin{theorem}\label{maintheorem}
 Assume \eqref{boundfrombelow} and \eqref{transformability}. Let $u_0\in V_{\mathcal L}$. For every small enough $\eps>0$, there exists a unique function $u_\eps\in H_\eps^\LL$ such that $u_\eps$ is a minimizer of $\mathcal F_\eps$  over $\{u\in H_\eps^{\mathcal L}:u(0,x)=u_0(x)\mbox{ for a.e. $x\in\mathbb R^N$}\}$. Moreover, letting $u$ denote the unique solution to problem \eqref{mainproblem}, then as $\eps\to 0$ we have $u_\eps\to u$ a.e. in space-time, and for every $T>0$
\[
u_\eps\to u\;\;\mbox{in $C^0([0,T];V_{\mathcal L})$}.
\]
\end{theorem}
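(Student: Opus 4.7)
The plan is to reduce everything to the Fourier side and apply Proposition \ref{propo} parametrically in $\xi$. By Plancherel the functional $\mathcal F_\eps$ equals $\mathcal J_\eps(\hat u)$ and decouples as a direct integral over $\xi\in\mathbb R^N$ of the one-dimensional functionals
\[
\int_0^{+\infty} e^{-t/\eps}\left(\frac{\eps}{2}|\hat u'(t,\xi)|^2 + \frac{1}{2}\hat L(\xi)|\hat u(t,\xi)|^2 - \hat f(t,\xi)\overline{\hat u(t,\xi)}\right)dt,
\]
which coincide (up to the harmless replacement of the real inner product by the Hermitian one) with the finite-dimensional functional $\mathcal G_\eps$ of Section \ref{section2}, in which $A=\hat L(\xi)$ is a scalar, $y_0=\hat u_0(\xi)$, and the forcing is $\hat f(\cdot,\xi)$. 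Assumption \eqref{boundfrombelow} ensures $1+4\eps\hat L(\xi)>0$ for every small enough $\eps$ and a.e. $\xi$, so the eigenvalues
\[
\mu_\eps(\xi)=\frac{1-\sqrt{1+4\eps\hat L(\xi)}}{2\eps},\qquad \bar\mu_\eps(\xi)=\frac{1+\sqrt{1+4\eps\hat L(\xi)}}{2\eps}
\]
are real, with $\bar\mu_\eps(\xi)\ge 1/(2\eps)$ uniformly in $\xi$. Assumption \eqref{transformability} and Fubini yield that the 1D condition \eqref{toyassumption} is satisfied by $\hat f(\cdot,\xi)$ for a.e. $\xi$.

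By Proposition \ref{propo} applied pointwise in $\xi$, for each such $\xi$ there is a unique finite-$\mathcal G_\eps$-energy solution $\hat u_\eps(\cdot,\xi)$ of the parametric Euler-Lagrange equation $-\eps\hat u''+\hat u'+\hat L(\xi)\hat u=\hat f(\cdot,\xi)$ with $\hat u(0,\xi)=\hat u_0(\xi)$; it is the scalar analogue of \eqref{allsolutions} with the integration constants fixed by the 1D analogues of \eqref{nec} and \eqref{Pxy}. Integrating the a priori estimates of Proposition \ref{propo} in $\xi$ via Fubini, one verifies that the resulting $\hat u_\eps$ belongs to the Fourier image of $H_\eps^{\mathcal L}$: the relevant bounds involve only $|\hat u_0(\xi)|^2$ and $\int_0^{+\infty}e^{-s/(2\eps)}|\hat f(s,\xi)|^2\,ds$, both integrable in $\xi$ thanks to $u_0\in V_{\mathcal L}$ and \eqref{transformability}. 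Uniqueness of the minimizer follows because any minimizer is a critical point, whose Fourier transform solves the parametric elliptic-in-time equation pointwise in $\xi$ with initial data $\hat u_0(\xi)$ and finite 1D energy, and thus must coincide with $\hat u_\eps(\cdot,\xi)$ by Proposition \ref{propo}.

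For the convergence, Proposition \ref{propo} gives, for a.e. $\xi$, that $\hat u_\eps(\cdot,\xi)\to\hat u(\cdot,\xi)$ uniformly on $[0,T]$, where $\hat u(\cdot,\xi)$ is the solution \eqref{original}. To upgrade this to $u_\eps\to u$ in $C^0([0,T];V_{\mathcal L})$ I would use dominated convergence in $\xi$, aiming for
\[
\sup_{t\in[0,T]}\int_{\mathbb R^N}(1+|\hat L(\xi)|)|\hat u_\eps(t,\xi)-\hat u(t,\xi)|^2\,d\xi\to 0,
\]
by first exhibiting a $\xi$-integrable majorant of $\sup_{t\in[0,T]}(1+|\hat L(\xi)|)|\hat u_\eps(t,\xi)-\hat u(t,\xi)|^2$ valid uniformly in small $\eps$, and then combining it with the pointwise convergence just recalled. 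The a.e. convergence of $u_\eps$ to $u$ in space-time will follow by passing to subsequences and invoking the inverse Fourier transform.

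The main obstacle will be producing such a uniform-in-$\eps$ dominating function with the extra $(1+|\hat L(\xi)|)$ weight of the $V_{\mathcal L}$ norm. The decaying-mode contribution behaves like $e^{\mu_\eps(\xi)t}$ with $\mu_\eps(\xi)\to -\hat L(\xi)$, so its $V_{\mathcal L}$ norm can be controlled using the same type of estimate as in the proposition preceding the theorem (see in particular \eqref{firstdominant}). The delicate term is the growing-mode correction $\bar p_\eps(t,\xi)=e^{\bar\mu_\eps(\xi)t}\int_t^{+\infty}e^{-\bar\mu_\eps(\xi)s}g_\eps(s,\xi)\,ds$; arguing as in \eqref{buona} and using $\bar\mu_\eps(\xi)\ge 1/(2\eps)$ should yield
\[
\sup_{t\in[0,T]}|\bar p_\eps(t,\xi)|^2\le C\eps\int_0^{+\infty}e^{-s/(2\eps)}|\hat f(s,\xi)|^2\,ds,
\]
after which integrability of $(1+|\hat L(\xi)|)$ times this bound must be secured either through a further application of Fubini and \eqref{transformability}, or by splitting $\mathbb R^N$ into the regions $\{|\hat L(\xi)|\le R\}$ and $\{|\hat L(\xi)|>R\}$ and exploiting the fact that on the latter the exponential weight $e^{-s/(2\eps)}$ can absorb the factor $|\hat L(\xi)|$. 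Handling this interaction between the operator symbol and the exponential weights is the principal technical point.
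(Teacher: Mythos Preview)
Your overall strategy---Fourier decoupling, identifying the unique finite-energy solution of the parametric Euler--Lagrange ODE, and dominated convergence for the $C^0([0,T];V_{\mathcal L})$ limit---is exactly the paper's route. However, there is a genuine gap in both the existence step and the convergence step, and it is precisely the point you flag as the ``principal technical point'' without resolving.

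The estimates from Proposition~\ref{propo} do \emph{not} simply integrate in $\xi$: their constants depend on the matrix $A$, hence here on $\hat L(\xi)$, which is unbounded above. In particular, checking $u_\eps\in H_\eps^{\mathcal L}$ requires controlling $\int e^{-t/\eps}\|\hat u_\eps'(t,\cdot)\|_2^2\,dt$ and $\int e^{-t/\eps}\int |\hat L(\xi)|\,|\hat u_\eps|^2$, and the second (and in fact also the first, since $\hat u_\eps'$ carries factors $\lambda_\eps(\xi),\mu_\eps(\xi)$ that grow with $\hat L(\xi)$) is not governed by the 1D bounds alone. Your proposed majorant for the $\bar p_\eps$ term, obtained via \eqref{buona}, is $C\eps\int e^{-s/(2\eps)}|\hat f(s,\xi)|^2\,ds$; multiplying this by $1+|\hat L(\xi)|$ and integrating in $\xi$ has no reason to be finite, since \eqref{transformability} gives no $|\hat L|$-weighted integrability of $\hat f$. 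Your suggested fix---that the time weight $e^{-s/(2\eps)}$ could ``absorb the factor $|\hat L(\xi)|$''---cannot work, because the $s$-integral and the $\xi$-variable are completely decoupled at that stage.

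The missing observation is that the explicit formula (the scalar version of \eqref{allsolutions}) carries a prefactor $1/Z_\eps(\xi)$ on every forcing integral, where $Z_\eps(\xi)=\sqrt{1+4\eps\hat L(\xi)}$ (in your notation this is hidden in $g_\eps=\hat f/Z_\eps$). The elementary bound
\[
\frac{|\hat L(\xi)|}{Z_\eps(\xi)^2}=\frac{|\hat L(\xi)|}{1+4\eps\hat L(\xi)}\le \frac{1}{4\eps}
\]
(valid for a.e.\ $\xi$ once $4\eps|K|<1/2$) is what converts the $|\hat L(\xi)|$ weight into a uniform constant. Combined with $|\lambda_\eps(\xi)|^2\le |\hat L(\xi)|/\eps$ and $\lambda_\eps(\xi)\le -2K$, this yields both the $H_\eps^{\mathcal L}$ membership of $u_\eps$ (see the paper's \eqref{a1}--\eqref{b4}) and the uniform-in-$\eps$ dominating function needed for \eqref{claim}. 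Once you retain the $1/Z_\eps(\xi)$ factor instead of estimating $|g_\eps|\le 2|\hat f|$ as in \eqref{buona}, the rest of your outline goes through essentially as the paper does it.
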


\begin{proof}
We preliminarily observe that functional $\mathcal F_\eps$, for every fixed small enough $\eps$, is bounded from below over the set $\{u\in H^{\mathcal L}_\eps: u(0,x)=u_0(x) \mbox{ for a.e. $x\in\mathbb R^N$}\}$. This is a direct consequence of \eqref{boundfrombelow}, \eqref{transformability} and \eqref{fromu'tou}, since Young inequality entails
\[\begin{aligned}
&\left|\int_0^{+\infty}e^{-t/\eps} \int_{\mathbb R^N}f(t,x)u(t,x)\,dx\,dt\right|\\
&\qquad\le 4\eps\int_0^{+\infty}e^{-t/\eps}\|f(t,\cdot)\|^2_2\,dt+\frac18\|u_0\|^2_2+\frac\eps4\int_0^{+\infty}e^{-t/\eps}\|u'(t,\cdot)\|_2^2\,dt,
\end{aligned}\]
with the shorthand $\|\cdot\|_2$ for $\|\cdot\|_{L^2(\mathbb R^N)}$, here and in the rest of the proof.
Moreover there holds
\[\begin{aligned}
&\frac12\int_0^{+\infty}e^{-t/\eps}\int_{\mathbb R^N}u\,\mathcal Lu\,dx\,dt\ge \frac K2\int_0^{+\infty}e^{-t/\eps}\int_{\mathbb R^N}|u(t,x)|^2\,dx\,dt\\
&\qquad\ge K\eps\|u_0\|^2_2+2K\eps^2\int_0^{+\infty}e^{-t/\eps}\|u'(t,\cdot)\|_2^2\,dt.
\end{aligned}\]
As a consequence,  for every small enough $\eps>0$ we have the following bound from below
\[\begin{aligned}
\mathcal F(u_\eps)&= \frac\eps2\int_0^{+\infty}e^{-t/\eps}\|u'(t,\cdot)\|^2_2\,dt-\int_0^{+\infty}e^{-t/\eps} \int_{\mathbb R^N}f(t,x)u(t,x)\,dx\,dt\\&
\qquad+\frac12\int_0^{+\infty}\int_{\mathbb R^N}u\,\mathcal Lu\,dx\,dt\ge -4\eps\int_0^{+\infty}e^{-t/\eps}\|f(t,\cdot)\|^2_2\,dt+\left(K\eps-\frac18\right)\|u_0\|^2_2.
\end{aligned}\]

We also preliminarily observe that if $u_\eps\in H^\LL_\eps$ is a critical point of $\mathcal F_\eps$, then for every  $\varphi\in C^{\infty}_c((0,+\infty)\times\mathbb R^N)$ by \eqref{derivative} we have
\[
\int_{0}^{+\infty}e^{-t/\eps}\int_{\mathbb R^N}\left(\eps u_\eps'\varphi'+\varphi\mathcal L u_\eps-f\varphi\right)\,dx\,dt=0.
\]
Integration by parts in time entails
\[
\int_{0}^{+\infty}e^{-t/\eps}\int_{\mathbb R^N}\left(-\eps u_\eps''+u_\eps'+\mathcal L u_\eps-f\right)\varphi\,dx\,dt=0.
\]
This shows that, in Fourier variables, for a.e. $\xi\in\mathbb R^N$ the function $\hat u_\eps(\cdot,\xi)$ satisfies the ODE
\begin{equation}\label{sec2}
-\eps\hat u''+\hat u'+\hat L(\xi)\hat u=\hat f,\quad t>0.
\end{equation}
In particular, a minimizer of $\mathcal F_\eps$  over $\{u\in H_\eps^{\mathcal L}:u(0,x)=u_0(x)\mbox{ for a.e. $x\in\mathbb R^N$}\}$ has Fourier transform that necessarily satisfies \eqref{sec2}.\\

{\textbf{Step 1: uniqueness.} }
Suppose  that $u_\eps\in H^\LL_\eps$ is a minimizer of $\mathcal F_\eps$ over $\{u\in H_\eps^{\mathcal L}:u(0,x)=u_0(x)\mbox{ for a.e. $x\in\mathbb R^N$}\}$. Thus it satisfies \eqref{sec2}. 
The characteristic polynomial of the operator in the right hand side of \eqref{sec2} is $-\eps\lambda^2+\lambda+\hat L(\xi)$, therefore for every small enough $\eps$ (independently on $\xi$, since $\hat L$ is bounded from below) its  roots are real and given by
\[
\lambda_\eps(\xi):=\frac{1-Z_\eps(\xi)}{2\eps},\qquad\mu_\eps(\xi):=\frac{1+Z_\eps(\xi)}{2\eps},\quad\qquad\mbox{where}\quad Z_\eps(\xi):=\sqrt{1+4\eps\hat L(\xi)}.
\]
Moreover, the general solution of \eqref{sec2} is given, for every fixed $\xi$, by
\begin{equation}\label{twolines}\begin{aligned}
\hat v_\eps(t,\xi)&=e^{\lambda_\eps(\xi)t}\left(c_1(\xi)+\frac1{Z_\eps(\xi)}\int_0^te^{-\lambda_\eps(\xi)s}\hat f(s,\xi)\,ds\right)\\&\qquad+e^{\mu_\eps(\xi)t}\left(c_2(\xi)-\frac1{Z_\eps(\xi)}\int_0^te^{-\mu_\eps(\xi)s}\hat f(s,\xi)\,ds\right),
\end{aligned}\end{equation}
and by recalling  \eqref{fromu'tou},  $v_\eps$ belonging to $H_\eps$ requires by Fubini and Plancherel theorem that
\begin{equation}\label{condizione}
\int_0^{+\infty}e^{-t/\eps}|\hat v_\eps(t,\xi)|^2\,dt<+\infty\quad\mbox{for a.e. $\xi\in \mathbb R^N$.}
\end{equation}
We notice that by \eqref{twolines} we have
\[
e^{-t/(2\eps)}\hat v_\eps(t,\xi)=a_\eps(t,\xi)+b_\eps(t,\xi),
\]
where
\[
a_\eps(t,\xi):=e^{Z_\eps(\xi)t/(2\eps)}\left(c_2(\xi)-\frac1{Z_\eps(\xi)}\int_0^te^{-\mu_\eps(\xi)s}\hat f(s,\xi)\,ds\right),
\]
\[
b_\eps(t,\xi):=e^{-Z_\eps(\xi)t/(2\eps)}\left(c_1(\xi)+\frac1{Z_\eps(\xi)}\int_0^te^{-\lambda_\eps(\xi)s}\hat f(s,\xi)\,ds\right)
\]
As an immediate consequence of \eqref{transformability} we have, for every small enough $\eps $,
\[\limsup_{t\to+\infty}|b_\eps(t,\xi)|\le\int_0^{+\infty}e^{-s/(2\eps)}|\hat f(s,\xi)|\,ds<+\infty\quad\mbox{ for a.e. $\xi\in\mathbb R^N$}\]
and
\[
\lim_{t\to+\infty}\int_0^te^{-\mu_\eps(\xi)s}\,|\hat f(s,\xi)|\,ds=\int_0^{+\infty}e^{-\mu_\eps(\xi)s}\,|\hat f(s,\xi)|\,ds<+\infty\quad\mbox{ for a.e. $\xi\in\mathbb R^N$}.
\]
Therefore, a necessary condition for the validity of \eqref{condizione} is 
\[
c_2(\xi)=\frac1{Z_\eps(\xi)}\int_0^{+\infty}e^{-\mu_\eps(\xi)s}\hat f(s,\xi)\,ds\quad\mbox{ for a.e. $\xi\in\mathbb R^N$}.
\]
Inserting this in \eqref{twolines}, we deduce that being $u_\eps$ a minimizer satisfying the initial condition $\hat u(0,\xi)=\hat u_0(\xi)$ for a.e. $\xi\in\mathbb R^N$, then its Fourier transform in space  must take the form 
\begin{equation}\label{ueps}\begin{aligned}
\hat u_\eps(t,\xi)&=e^{\lambda_\eps(\xi)t}\left(\hat u_0(\xi)-\frac1{Z_\eps(\xi)}\int_0^{+\infty}e^{-\mu_\eps(\xi)s}\hat f(s,\xi)\,ds\right)\\&\quad+\frac{e^{\lambda_\eps(\xi)t}}{Z_\eps(\xi)}\int_0^te^{-\lambda_\eps(\xi)s}\hat f(s,\xi)\,ds+\frac{e^{\mu_\eps(\xi)t}}{Z_\eps(\xi)}\int_t^{+\infty}e^{-\mu_\eps(\xi)s}\hat f(s,\xi)\,ds.
\end{aligned}\end{equation}

{\textbf{Step 2: existence}}.
Let us check that the inverse Fourier transform of the function from \eqref{ueps} belongs indeed to $H_\eps^\LL$, thus showing it is the unique minimizer of $\mathcal F_\eps$ over $\{u\in H_\eps^{\mathcal L}:u(0,x)=u_0(x)\mbox{ for a.e. $x\in\mathbb R^N$}\}$, and in fact the only solution to \eqref{sec2} with initial datum $u_0$ and with finite $\mathcal F_\eps$ energy.
We have
\begin{equation}\label{ueps'}\begin{aligned}
\hat u'_\eps(t,\xi)&=\lambda_\eps(\xi)\,e^{\lambda_\eps(\xi)t}\left(\hat u_0(\xi)-\frac1{Z_\eps(\xi)}\int_0
^{+\infty}e^{-\mu_\eps(\xi)s}\hat f(s,\xi)\,ds\right)\\&\quad+\frac{\lambda_\eps(\xi)}{Z_\eps(\xi)}\,e^{\lambda_\eps(\xi)}\int_0^te^{-\lambda_\eps(\xi)s}\hat f(s,\xi)\,ds+\frac{\mu_\eps(\xi)}{Z_\eps(\xi)}\,e^{\mu_\eps(\xi)t}\int_t^{+\infty}e^{-\mu_\eps(\xi)s}\hat f(s,\xi)\,ds.
\end{aligned}
\end{equation}
We remark that \eqref{boundfrombelow} implies that for every small enough $\eps>0$ (such that $4\eps|K|<1/2$) the following estimates hold for a.e. $\xi\in\mathbb R^N$
\begin{equation}\label{ae}
Z_\eps(\xi)\ge\frac1{\sqrt2},\qquad\frac{|\hat L(\xi)|}{Z_\eps(\xi)^2}\le\frac1{4\eps},\qquad\left|\frac{\lambda_\eps(\xi)}{Z_\eps(\xi)}\right|\le \frac{\mu_\eps(\xi)}{Z_\eps(\xi)}\le \frac{1+\sqrt2}{2\eps},\qquad\mu_\eps(\xi)\ge\frac1{2\eps}\end{equation}
as well as
\begin{equation}\label{38}\lambda_\eps(\xi)\le -2K,\qquad |\lambda_\eps(\xi)|\le \sqrt{\frac{|\hat L(\xi)|}{\eps}},\qquad-\lambda_\eps(\xi)\le \hat L(\xi),\end{equation}
 obtained by using that $\sqrt{1+x}\ge 1+x$ for $x\in (-1,0)$, that $|1-\sqrt{1+y}|\le \sqrt{|y|}$ for $y\in(-1,+\infty)$, and that $\sqrt{1+z}-1\le z/2 $ for $z\in(-1,+\infty)$, respectively.
The estimates \eqref{ae} along with
 H\"older inequality and Fubini theorem entail
\begin{equation}\label{a1}\begin{aligned}&\int_0^{+\infty}\int_{\mathbb R^N}e^{-t/\eps}\left|\frac{\lambda_\eps(\xi)}{Z_\eps(\xi)}\right|^2\,e^{2\lambda_\eps(\xi)t}\left|\int_0^{+\infty}e^{-\mu_\eps(\xi)s}\hat f(s,\xi)\,ds\right|^2\,d\xi\,dt\\
&\qquad \le\int_0^{+\infty}\int_{\mathbb R^N}e^{-t/\eps}\left|\frac{\lambda_\eps(\xi)}{Z_\eps(\xi)}\right|^2\frac1{\mu_\eps(\xi)}\,e^{2\lambda_\eps(\xi)t}\left(\int_0^{+\infty}e^{-\mu_\eps(\xi)s}|\hat f(s,\xi)|^2\,ds\right)\,d\xi\,dt\\
&\qquad\le\frac{(1+\sqrt2)^2}{2\eps}\int_0^{+\infty}e^{-t/(\sqrt2\eps)}\,dt\int_0^{+\infty}e^{-s/(2\eps)}\|\hat f(s,\cdot)\|_2^2\,ds\\&\qquad\le \frac{\sqrt2}2(1+\sqrt2)^2\int_0^{+\infty}e^{-s/(2\eps)}\|\hat f(s,\cdot)\|_2^2\,ds<+\infty
\end{aligned}\end{equation}
for every small enough $\eps>0$, where finiteness is due to \eqref{transformability}, and similarly
\begin{equation}\label{b1}\begin{aligned}&\int_0^{+\infty}\int_{\mathbb R^N}e^{-t/\eps}\frac{|\hat L(\xi)|}{Z_\eps(\xi)^2}\,e^{2\lambda_\eps(\xi)t}\left|\int_0^{+\infty}e^{-\mu_\eps(\xi)s}\hat f(s,\xi)\,ds\right|^2\,d\xi\,dt\\
&\qquad \le\frac12\int_0^{+\infty}e^{-t/\eps}\,e^{2\lambda_\eps(\xi)t}\,dt \int_0^{+\infty}e^{-s/(2\eps)}\|\hat f(s,\cdot)\|^2_2\,ds\\
&\qquad\le\frac{\sqrt2}{2}\,\eps \int_0^{+\infty}e^{-s/(2\eps)}\|\hat f(s,\cdot)\|^2_2\,ds<+\infty.
\end{aligned}\end{equation}
The same arguments using \eqref{ae}-\eqref{38} also yield
\begin{equation}\label{a2}
\begin{aligned}
&\int_{0}^{+\infty}\int_{\mathbb R^N}e^{-t/\eps}\left|\frac{\lambda_\eps(\xi)}{Z_\eps(\xi)}\right|^2\,e^{2\lambda_\eps(\xi)t}\left|\int_0^te^{-\lambda_\eps(\xi)s}\hat f(s,\xi)\,ds\right|^2\,d\xi\,dt\\
&\qquad\le\left(\frac{1+\sqrt2}{2\eps}\right)^2\int_0^{+\infty}\int_{\mathbb R^N}te^{-t/\eps}\int_0^t e^{2\lambda_\eps(t-s)}|\hat f(s,\xi)|^2\,ds\,d\xi\,dt\\
&\qquad\le \left(\frac{1+\sqrt2}{2\eps}\right)^2\int_0^{+\infty}te^{-t/\eps}\int_0^te^{-4K(t-s)}\|\hat f(s,\cdot)\|_2^2\,ds\,dt\\
&\qquad \le \left(\frac{1+\sqrt2}{2\eps}\right)^2\int_0^{+\infty}te^{-4Kt}e^{-t/(2\eps)}\,dt\int_0^{+\infty}e^{-s/(2\eps)}\|\hat f(s,\cdot)\|_2^2\,ds\\
&\qquad =\left(\frac{1+\sqrt2}{1+8\eps K}\right)^2\int_0^{+\infty}e^{-s/(2\eps)}\|\hat f(s,\cdot)\|_2^2\,ds<+\infty
\end{aligned}
\end{equation}
and similarly
\begin{equation}\label{b2}
\begin{aligned}
&\int_{0}^{+\infty}\int_{\mathbb R^N}e^{-t/\eps}\frac{|\hat L(\xi)|}{Z_\eps(\xi)^2}\,e^{2\lambda_\eps(\xi)t}\left|\int_0^te^{-\lambda_\eps(\xi)s}\hat f(s,\xi)\,ds\right|^2\,d\xi\,dt\\
&\qquad\le\frac{\eps}{(1+8\eps K)^2}\int_0^{+\infty}e^{-s/(2\eps)}\|\hat f(s,\cdot)\|_2^2\,ds<+\infty.
\end{aligned}
\end{equation}
Furthermore, by the same reasoning, 
\begin{equation}\label{a3}
\begin{aligned}
&\int_{0}^{+\infty}\int_{\mathbb R^N}e^{-t/\eps}\left|\frac{\mu_\eps(\xi)}{Z_\eps(\xi)}\right|^2\,e^{2\mu_\eps(\xi)t}\left|\int_t^{+\infty}e^{-\mu_\eps(\xi)s}\hat f(s,\xi)\,ds\right|^2\,d\xi\,dt\\
&\quad\le \int_{0}^{+\infty}\int_{\mathbb R^N}e^{-t/\eps}\left|\frac{\mu_\eps(\xi)}{Z_\eps(\xi)}\right|^2\,e^{2\mu_\eps(\xi)t}\int_t^{+\infty}e^{-\mu_\eps(\xi)s}\,ds\int_t^{+\infty}e^{-\mu_\eps(\xi)s}|\hat f(s,\xi)|^2\,ds\,d\xi\,dt\\
&\quad\le \frac{(1+\sqrt2)^2}{2\eps}\int_0^{+\infty}\int_{\mathbb R^N}e^{-t/\eps}\int_t^{+\infty}e^{-\mu_\eps(s-t)}|\hat f(s,\xi)|^2\,ds\,d\xi\,dt\\
&\quad \le \frac{(1+\sqrt2)^2}{2\eps}\int_0^{+\infty}e^{-t/(2\eps)}\,dt\int_0^{+\infty}e^{-s/(2\eps)}\|\hat f(s,\cdot)\|_2^2\,ds\\&\quad=(1+\sqrt2)^2\int_0^{+\infty}e^{-s/(2\eps)}\|\hat f(s,\cdot)\|_2^2\,ds<+\infty
\end{aligned}
\end{equation}
and
\begin{equation}\label{b3}
\begin{aligned}
&\int_{0}^{+\infty}\int_{\mathbb R^N}e^{-t/\eps}\frac{|\hat L(\xi)|}{Z_\eps(\xi)^2}\,e^{2\mu_\eps(\xi)t}\left|\int_t^{+\infty}e^{-\mu_\eps(\xi)s}\hat f(s,\xi)\,ds\right|^2\,d\xi\,dt\\
&\qquad\le \eps \int_0^{+\infty}e^{-s/(2\eps)}\|\hat f(s,\cdot)\|_2^2\,ds<+\infty.
\end{aligned}
\end{equation}
Eventually, again with \eqref{ae}-\eqref{38} we have
\begin{equation}\label{a4}\begin{aligned}
&\int_0^{+\infty}\int_{\mathbb R^N}e^{-t/\eps}|\lambda_\eps(\xi)|^2\,e^{2\lambda_\eps(\xi)t}|\hat u_0(\xi)|^2\,d\xi\,dt\\&\qquad\le\frac1\eps\int_0^{+\infty}\int_{\mathbb R^N}e^{-t/\eps}\,e^{2\lambda_\eps(\xi)t}|\hat L(\xi)||\hat u_0(\xi)|^2\,d\xi\,dt\le \sqrt2 \int_{\mathbb R^N}|\hat L(\xi)||\hat u_0(\xi)|^2\,d\xi<+\infty\end{aligned}
\end{equation}
and
\begin{equation}\label{b4}
\int_0^{+\infty}\int_{\mathbb R^N}e^{-t/\eps}|\hat L(\xi)|\,e^{2\lambda_\eps(\xi)t}|\hat u_0(\xi)|^2\,d\xi\,dt\le\sqrt2 \eps\int_{\mathbb R^N}|\hat L(\xi)||\hat u_0(\xi)|^2\,d\xi<+\infty
\end{equation}
since $u_0\in V_{\mathcal L}$.
By recalling the expression of $\hat u'_\eps$ from \eqref{ueps'}, and by taking into account \eqref{a1}, \eqref{a2}, \eqref{a3}, \eqref{a4} we deduce  that
\[
\int_0^{+\infty}e^{-t/\eps}\|\hat u'_\eps(t,\cdot)\|^2_{L^2(\mathbb R^N)}\,dt<+\infty
\]
for every small enough $\eps>0$. Similarly, from \eqref{ueps} and taking into account \eqref{b1}, \eqref{b2}, \eqref{b3}, \eqref{b4} we infer
\[
\int_0^{+\infty}e^{-t/\eps}\int_{\mathbb R^N}|\hat L(\xi)||\hat u_\eps(t,\xi)|^2\,d\xi\,dt<+\infty
\]
for every small enough $\eps>0$. By Plancherel theorem, we have therefore $u_\eps\in H_\eps^{\mathcal L}$ for every small enough $\eps>0$.\\

{\textbf{Step 3: convergence.}}
We prove that $u_\eps\to u$ in $C^0([0,T];V_{\mathcal L}(\mathbb R^N))$ for every $T>0$, i.e.,
\begin{equation}\label{mainconvergence}
\lim_{\eps\to 0}\sup_{t\in[0,T]}\left\|u_\eps(t,\cdot)-u(t,\cdot)\right\|_{V_\mathcal L}=0.
\end{equation}
 The main estimates that follow are similar to the ones of the previous step.
We have by \eqref{ae}-\eqref{38} and H\"older inequality
\[\begin{aligned}
&\int_{\mathbb R^N}\left|\frac{e^{\lambda_\eps(\xi)t}}{Z_\eps(\xi)}\int_0^{+\infty}e^{-\mu_\eps(\xi)s}\hat f(s,\xi)\,ds\right|^2\,d\xi\\
&\qquad\le 2e^{-4Kt}\int_{\mathbb R^N}\frac1{\mu_\eps(\xi)}\int_0^{+\infty}e^{-\mu_\eps(\xi)s}|\hat f(s,\xi)\,ds\,d\xi\le 4\eps e^{-4Kt}\int_0^{+\infty}e^{-s/(2\eps)}\|\hat f(s,\cdot)\|_2^2\,ds,
\end{aligned}\]
therefore
\begin{equation}\label{c1}\begin{aligned}
&\lim_{\eps\to0}\sup_{t\in[0,T]}\int_{\mathbb R^N}\left|\frac{e^{\lambda_\eps(\xi)t}}{Z_\eps(\xi)}\int_0^{+\infty}e^{-\mu_\eps(\xi)s}\hat f(s,\xi)\,ds\right|^2\,d\xi\\&\qquad\le \lim_{\eps\to0}4\eps e^{-4KT}\int_0^{+\infty}e^{-s/(2\eps)}\|\hat f(s,\cdot)\|_2^2\,ds=0
\end{aligned}\end{equation}
where the integral in the right hand side is indeed vanishing as $\eps \to 0$ thanks to the dominated convergence theorem and to the assumption \eqref{transformability}. Similarly, by also using the second estimate in \eqref{ae} we have
\[
\begin{aligned}
&\int_{\mathbb R^N}|\hat L(\xi)|\left|\frac{e^{\lambda_\eps(\xi)t}}{Z_\eps(\xi)}\int_0^{+\infty}e^{-\mu_\eps(\xi)s}\hat f(s,\xi)\,ds\right|^2\,d\xi\le\frac12 e^{-4Kt}\int_0^{+\infty}e^{-s/(2\eps)}\|\hat f(s,\cdot)\|_2^2\,ds
\end{aligned}
\]
so that 
\begin{equation}\label{c2}\begin{aligned}
\lim_{\eps\to0}\sup_{t\in[0,T]}\int_{\mathbb R^N}|\hat L(\xi)|\left|\frac{e^{\lambda_\eps(\xi)t}}{Z_\eps(\xi)}\int_0^{+\infty}e^{-\mu_\eps(\xi)s}\hat f(s,\xi)\,ds\right|^2\,d\xi=0.
\end{aligned}\end{equation}
Moreover, \eqref{ae}-\eqref{38} allow to deduce with analogous reasoning  that
\[\begin{aligned}
\int_{\mathbb R^N}\left|\frac{e^{\mu_\eps(\xi)t}}{Z_\eps(\xi)}\int_t^{+\infty}e^{-\mu_\eps(\xi)s}\hat f(s,\xi)\,ds\right|^2\,d\xi
\le4\eps\int_t^{+\infty}e^{-(s-t)/(2\eps)}\|\hat f(s,\cdot)\|_2^2\,ds,
\end{aligned}\]
and that
\[
\int_{\mathbb R^N}|\hat L(\xi)|\left|\frac{e^{\mu_\eps(\xi)t}}{Z_\eps(\xi)}\int_t^{+\infty}e^{-\mu_\eps(\xi)s}\hat f(s,\xi)\,ds\right|^2\,d\xi
\le\frac12\int_t^{+\infty}e^{-(s-t)/(2\eps)}\|\hat f(s,\cdot)\|_2^2\,ds.
\]
On the other hand,
\[\begin{aligned}
&\sup_{t\in[0,T]}\int_t^{+\infty}e^{-(s-t)/(2\eps)}\|\hat f(s,\cdot)\|_2^2\,ds\\&\qquad\le
\sup_{t\in[0,T]}\int_t^{2T}e^{-(s-t)/(2\eps)}\|\hat f(s,\cdot)\|_2^2\,ds+\sup_{t\in[0,T]}\int_{2T}^{+\infty}e^{-(s-t)/(2\eps)}\|\hat f(s,\cdot)\|_2^2\,ds\\
&\qquad \le\sup_{t\in[0,2T]}\int_t^{2T}e^{-(s-t)/(2\eps)}\|\hat f(s,\cdot)\|_2^2\,ds+\int_{2T}^{+\infty}e^{-s/(4\eps)}\|\hat f(s,\cdot)\|_2^2\,ds
\end{aligned}\]
where the first term in right hand side vanishes as $\eps\to0$ due to Lemma \ref{tech}, since \eqref{transformability} implies that the function $s\mapsto \|\hat f(s,\cdot)\|_2^2$ is in $L^1(0,2T)$, while the second term in the right hand side vanishes by assumption \eqref{transformability} and by dominated convergence.  We conclude that
\begin{equation}\label{c3}
\lim_{\eps\to 0}\sup_ {t\in[0,T]} \int_{\mathbb R^N}(1+|\hat L(\xi)|)\left|\frac{e^{\mu_\eps(\xi)t}}{Z_\eps(\xi)}\int_t^{+\infty}e^{-\mu_\eps(\xi)s}\hat f(s,\xi)\,ds\right|^2\,d\xi=0
\end{equation}

We also notice that thanks to \eqref{38} and \eqref{boundfrombelow}
\[\begin{aligned}
&\sup_{t\in[0,T]}\left|e^{\lambda_\eps(\xi)t}-e^{-\hat L(\xi)t}\right|^2\le T^2\,e^{-4KT}|\lambda_\eps(\xi)-\hat L(\xi)|^2
\end{aligned}\]
so that $$\sup_{t\in[0,T]}\left|e^{\lambda_\eps(\xi)t}-e^{-\hat L(\xi)t}\right|^2\to 0$$ as $\eps\to 0$ for a.e. $\xi\in\mathbb R^N$,
and also
\[
\sup_{t\in[0,T]}\left|e^{\lambda_\eps(\xi)t}-e^{-\hat L(\xi)t}\right|^2\le 2\sup_{t\in[0,T]}\left(e^{2\lambda_\eps(\xi)t}+e^{-2\hat L(\xi)t}\right)\le 4e^{-4KT}
\]
for a.e. $\xi\in\mathbb R^N$. Therefore by dominated convergence theorem and since $u_0\in V_{\mathcal L}$ we deduce
\begin{equation}\label{c4}
\lim_{\eps\to 0}\sup_{t\in[0,T]}\int_{\mathbb R^N}(1+|\hat L(\xi)|)\left|e^{\lambda_\eps(\xi)t}-e^{-\hat L(\xi)t}\right|^2\,|\hat u_0(\xi)|^2\,d\xi=0
\end{equation}

We finally claim that as $\eps\to 0$
\begin{equation}\label{claim}
\int_{\mathbb R^N}(1+|\hat L(\xi)|)\sup_{t\in[0,T]}\left|\frac{e^{\lambda_\eps(\xi)t}}{Z_\eps(\xi)}\int_0^te^{-\lambda_\eps(\xi)s}\hat f(s,\xi)\,ds-\int_0^te^{-\hat L(\xi)(t-s)}\hat f(s,\xi)\,ds\right|^2\,d\xi\to0.
\end{equation}
We will prove the claim by dominated convergence.
By \eqref{boundfrombelow}, \eqref{ae} and \eqref{38} we directly have that if $\hat L(\xi)\le 0$, then
\[
\sup_{t\in[0,T]}\frac{1+|\hat L(\xi)|}{Z_\eps(\xi)^2}\int_0^te^{2\lambda_\eps(\xi)(t-s)}\,ds\le 2(1-K)Te^{-4KT},\]
whereas for $\hat L(\xi)>0$ (so that $\lambda_\eps(\xi)<0$) there holds
\[\begin{aligned}
&\sup_{t\in[0,T]}\frac{1+|\hat L(\xi)|}{Z_\eps(\xi)^2}\int_0^te^{2\lambda_\eps(\xi)(t-s)}\,ds\le \sup_{t\in[0,T]}\frac{1+|\hat L(\xi)|}{Z_\eps(\xi)^2}\frac{e^{2\lambda_\eps(\xi)t}-1}{2\lambda_\eps(\xi)}\\
&\qquad=\sup_{t\in[0,T]} \frac{\eps\hat L(\xi)(1-e^{2\lambda_\eps(\xi)t})}{(1+4\eps\hat L(\xi))(\sqrt{1+4\eps\hat L(\xi)}-1)}+\frac{e^{2\lambda_\eps(\xi)t}-1}{2\lambda_\eps(\xi)Z_\eps(\xi)^2}\le \frac12+2T,
\end{aligned}\]
having used the elementary inequalities $\frac{2x}{(1+4x)(\sqrt{1+4x}-1)}\le 1$ for every $x>0$ and $\frac{1-e^{-y}}{y}\le 1$ for every $y>0$. Combining the last two estimates we see that
\[
\sup_{t\in[0,T]}\frac{1+|\hat L(\xi)|}{Z_\eps(\xi)^2}\int_0^te^{2\lambda_\eps(\xi)(t-s)}\le 2(1-K)(T+1)e^{-4KT}\qquad\mbox{ for a.e. $\xi\in\mathbb R^N$},
\]
thus H\"older inequality entails
\begin{equation}\label{seconddominating}\begin{aligned}
&\sup_{t\in[0,T]}\frac{1+|\hat L(\xi)|}{Z_\eps(\xi)^2}\left|\int_0^te^{\lambda_\eps(\xi)(t-s)}\hat f(s,\xi)\right|^2\\&\qquad\le \sup_{t\in[0,T]}\frac{1+|\hat L(\xi)|}{Z_\eps(\xi)^2}\int_0^te^{2\lambda_\eps(\xi)(t-s)}\,ds\int_{0}^t|\hat f(s,\xi)|^2\,ds\\
&\qquad\le 2(1-K)(T+1)e^{-4KT}\int_0^T|\hat f(s,\xi)|^2\,ds\qquad\mbox{ for a.e. $\xi\in\mathbb R^N$}.
\end{aligned}\end{equation}
By \eqref{firstdominant} and \eqref{seconddominating}, we find that
$
h(\xi):=6(1-K)(T+1)e^{-4KT}\int_0^T|\hat f(s,\xi)|^2\,ds
$
is a  dominating function for the integrand in \eqref{claim}, noticing that indeed $h\in L^1(\mathbb R^N)$ as a consequence of assumption \ref{transformability}. Let us next prove the pointwise a.e. convergence to $0$ for the integrand in \eqref{claim}. Indeed, we have by \eqref{ae}-\eqref{38} that for a.e. $\xi\in\mathbb R^N$
\begin{equation}\label{firstae}\begin{aligned}
&\sup_{t\in[0,T]}\left|\frac{e^{\lambda_\eps(\xi)t}}{Z_\eps(\xi)}\int_0^te^{-\lambda_\eps(\xi)s}\hat f(s,\xi)\,ds-\frac{e^{\lambda_\eps(\xi)t}}{Z_\eps(\xi)}\int_0^te^{\hat L(\xi)s}\hat f(s,\xi)\,ds\right|\\&\qquad=\sup_{t\in[0,T]}\frac{e^{2\lambda_\eps(\xi)t}}{Z_\eps(\xi)^2}\int_0^t(e^{\hat L(\xi)s}-e^{-\lambda_\eps(\xi)s})|\hat f(s,\xi)|\,ds\\&\qquad\le 2e^{-4KT}\int_0^T(e^{\hat L(\xi)s}-e^{-\lambda_\eps(\xi)s})|\hat f(s,\xi)|\,ds.
\end{aligned}\end{equation}
For a.e. $\xi\in\mathbb R^N$, the above right hand side vanishes as $\eps\to 0$, since we have $-\lambda_\eps(\xi)\to \hat L(\xi) $ and  $(e^{\hat L(\xi)s}-e^{-\lambda_\eps(\xi)s})|\hat f(s,\xi)|\le e^{|\hat L(\xi)|T}|\hat f(s,\xi)|$ for every $s\in[0,T]$, where $s\mapsto f(s,\xi)$ is in $L^1((0,T))$ thanks to H\"older inequality and \eqref{transformability}. On the other hand, for a.e. $\xi\in\mathbb R^N$ we have
\begin{equation}\label{secondae}\begin{aligned}
&\sup_{t\in[0,T]}\left|\frac{e^{\lambda_\eps(\xi)t}}{Z_\eps(\xi)}\int_0^te^{\hat L(\xi)s}\hat f(s,\xi)\,ds-{e^{-\hat L(\xi)t}}\int_0^te^{\hat L(\xi)s}\hat f(s,\xi)\,ds\right|\\&\qquad\le \left(e^{|\hat L(\xi)|T}\int_0^T|\hat f(s,\xi)|\,ds\right)\sup_{t\in[0,T]}\left|\frac{e^{\lambda_\eps(\xi)t}}{Z_\eps(\xi)}-e^{-\hat L(\xi)t}\right|\end{aligned}\end{equation}
and we notice that by \eqref{boundfrombelow}, \eqref{ae}, \eqref{38}
\[\begin{aligned}
\sup_{t\in[0,T]}\left|\frac{e^{\lambda_\eps(\xi)t}}{Z_\eps(\xi)}-e^{-\hat L(\xi)t}\right|&\le \frac1{Z_\eps(\xi)}\sup_{t\in[0,T]}|e^{\lambda_\eps(\xi)t}-e^{-\hat L(\xi)t}|+\sup_{t\in[0,T]}\left|\frac{e^{-\hat L(\xi)}t}{Z_\eps(\xi)}-e^{-\hat L(\xi)t}\right|\\&\le \frac1{Z_\eps(\xi)}e^{-2KT}\sup_{t\in[0,T]}|\lambda_\eps(\xi)t+\hat L(\xi)t|+e^{-KT}\left|\frac1{Z_\eps(\xi)}-1\right|,
\end{aligned}\]
and the right hand side vanishes for a.e. $\xi\in\mathbb R^N$ as $\eps\to 0$, since $\lambda_\eps(\xi)\to-\hat L(\xi)$ and $Z_\eps(\xi)\to 1$.
Having shown that the right hand sides of \eqref{firstae} and \eqref{secondae} are both vanishing for a.e. $\xi\in\mathbb R^N$ as $\eps\to 0$, we infer that the same holds for the integrand in \eqref{claim} so that \eqref{claim} does hold and the claim is proved.

The combination of \eqref{c1}, \eqref{c2}, \eqref{c3}, \eqref{c4} and \eqref{claim} allows to obtain \eqref{mainconvergence}.
Since the right hand sides of of \eqref{firstae} and \eqref{secondae} are both vanishing for a.e. $\xi\in\mathbb R^N$ as $\eps\to 0$, we also deduce that 
\[
\frac{e^{\lambda_\eps(\xi)t}}{Z_\eps(\xi)}\int_0^te^{-\lambda_\eps(\xi)s}\hat f(s,\xi)\,ds\to\int_0^te^{-\hat L(\xi)(t-s)}\hat f(s,\xi)\,ds
\]
for a.e. $(t,\xi)\in (0,+\infty)\times\mathbb R^N$  as $\eps \to 0$, and similarly by taking into account the previous estimates we see that $u_\eps\to u$ a.e. in space-time as $\eps\to 0$.
\end{proof}

We conclude with a simple remark. We verify that existence of a minimizer of $\mathcal F_\eps$ over $\{u\in H_\eps^{\mathcal L}:u(0,x)=u_0(x)\mbox{ for a.e. $x\in\mathbb R^N$}\}$ can be obtained via direct method under a positivity assumption on the operator $\mathcal L$. In order to do this, let $\hat L\ge 0$. If $(u_n)_{n\in\mathbb N}\subseteq \{u\in H^{\mathcal L}_\eps: u(0,x)=u_0(x) \mbox{ for a.e. $x\in\mathbb R^N$}\}$ is a minimizing sequence for $\mathcal F_\eps$, then by \eqref{fromu'tou} we have that up to subsequences $u_n$ is weakly converging to $u$  in $L^2_{w}((0,+\infty)\times\mathbb R^N)$, where the weight $w$ is $e^{-t/\eps}\,dt\,dx$, and $u_n'$ is weakly converging to $u'$ in     $L^2_{w}((0,+\infty)\times\mathbb R^N)$. The weak lower semicontinuity of the $L^2$ norm, the continuity  of the linear operator $u\mapsto \int_0^{+\infty}e^{-t/\eps}\int_{\mathbb R^N}f(x,t)u(x,t)\,dx\,dt$, following from \eqref{transformability}, and the weak lower semicontinuity of the nonnegative quadratic form $u\mapsto \int_{0}^{+\infty}e^{-t/\eps}\int_{\mathbb R^N}u\,\mathcal Lu\,dx\,dt$ entail
\[
\mathcal F_\eps(u)\le\liminf_{n\to+\infty}\mathcal F_\eps(u_n).
\]
Moreover, the uniform bounds of $(u_n)$ and $(u_n')$ in $L^2((0,T);L^2(\mathbb R^N))$ imply the a.e. identities $u(0,\cdot)=u_n(0,\cdot)=u_0(\cdot)$, for every $n$,
thus showing the minimality of $u$. By Theorem \ref{maintheorem}, the obtained minimizer $u$ is  unique and it  is the inverse Fourier transform of the function defined in \eqref{ueps}.

\bigskip\smallskip

\textbf{Acknowledgements.} 
The author acknowledges support from the MUR-PRIN  project  No.  202244A7YL. He is member of the GNAMPA group of the Istituto Nazionale di Alta Matematica (INDAM).

%


\begin{thebibliography}{99}
\bibitem{ABMS} G. Akagi, V.  B\"ogelein, A. Marveggio, U. Stefanelli,  {\textit{Weighted inertia-dissipation-energy approach to doubly nonlinear wave equations}}, J. Funct. Anal. 289(8) (2025), 111067.

\bibitem{AST} A. Audrito, E. Serra,  P. Tilli, {\textit  {A minimization procedure to the existence of segregated
solutions to parabolic reaction-diffusion systems}}, Comm. Partial Differential Equations,
46(12) (2021), 2268--2287.

\bibitem{BBCK} M. T. Barlow, R. F. Bass, Z.-Q. Chen and M. Kassmann, {\textit{Non-local Dirichlet forms and symmetric jump processes}}, Trans. Amer. Math. Soc. 361 (2009), 1963--1999.

\bibitem{BFRW} P. Bates, P. Fife, X. Ren, X. Wang, {\textit{Travelling waves in a convolution model for phase transitions}}, Arch. Rat. Mech. Anal. 138 (1997), 105--136.


\bibitem{BDM} V. B\"ogelein, F. Duzaar,  P. Marcellini, {\textit {Existence of evolutionary variational solutions
via the calculus of variations}}, J. Differ. Equ., 256(12) (2014), 3912--3942.

\bibitem{BDMS1} V. B\"ogelein, F. Duzaar, P. Marcellini, and S. Signoriello, {\textit {Nonlocal diffusion equations}}, J.
Math. Anal. Appl., 432(1) (2015), 398--428.

\bibitem{BDMS2} V. B\"ogelein, F. Duzaar, P. Marcellini, and S. Signoriello, {\textit {Parabolic equations and the
bounded slope condition}}, Ann. Inst. H. Poincar\'e C Anal. Non Lin\`eaire, 34(2) (2017), 355--379. 





\bibitem{BSV} M. Bonforte, Y. Sire, J. L. V\'azquez, {\textit{
Optimal existence and uniqueness theory for the fractional heat equation}},
Nonlinear Analysis: Theory, Methods $\&$ Applications 153
(2017),
 142--168.
 
 \bibitem{BP} C. Br\"andle, A. De Pablo,  {\textit{Nonlocal heat equations: Regularizing effect, decay estimates and Nash inequalities}}, Commun. Pure Appl. Anal. 17(3) (2018), 1161--1178.



\bibitem{CCR} E. Chasseigne, M. Chaves, J. D. Rossi, {\textit{Asymptotic behavior for nonlocal diffusion equations}}, J. Math. Pures Appl. (9) 86 (2006), 271--291.







\bibitem{DG}{E. De Giorgi}, \textit{Conjectures concerning some evolution problems. A celebration of John F. Nash, jr}. Duke Math. J. 81 (1996), 61-100.


 
 \bibitem{DG2}{E. De Giorgi}, {Selected Papers}. 
Springer-Verlag, New York, 2006, (edited by L. Ambrosio, G. Dal Maso, M. Forti, M. Miranda, and S. Spagnolo).




\bibitem{F}P. Fife, {\textit{Some nonclassical trends in parabolic and parabolic-like evolutions}}, in: Trends in Nonlinear Analysis, Springer-Verlag, Berlin, 2003,
pp. 153--191.

 \bibitem{LS} M. Liero, U. Stefanelli, {\textit{A new minimum principle for Lagrangian mechanics}}, J. Nonlinear Sci. 23 (2) (2013), 179--204.
 \bibitem{LS2}  M. Liero, U. Stefanelli, \textit{Weighted inertia-dissipation-energy functionals for semilinear equations}, Boll. Unione Mat. Ital. (9) 6  (2013), no. 1, 1--27.
\bibitem{MP} E. Mainini, D. Percivale, \textit{Newton’s second law as limit of variational problems},  Adv. Cont. Discr. Mod.  2023, 20 (2023).

\bibitem{MP2} E. Mainini, D. Percivale,
{\textit {On the weighted inertia-energy approach to forced wave equations}},
J. Differ. Equ. 385 (2024),
 121--154.
 
 \bibitem{M} P. Marcellini. {\textit A variational approach to parabolic equations under general and $p, q$-growth
conditions}, Nonlinear Anal. 194:111456, 17, (2020).

\bibitem{PT} H. Prasad, V. Tewary, {\textit Existence of variational solutions to nonlocal evolution equations via convex minimization}, ESAIM Control Optim. Calc. Var. 29  (2023) 2.

\bibitem {ST}{E. Serra, P. Tilli}, {\textit { Nonlinear wave equation as limits of convex minimization problems: proof of a conjecture by De Giorgi}, }Annals of Math.  175 (2012),  1551--1574.
\bibitem{ST2}{E. Serra, P. Tilli}, {\textit {A minimization approach to hyperbolic Cauchy problems}}. J. Eur. Math. Soc. 18 (2016), no. 9, 2019--2044.


\bibitem{S} {U. Stefanelli}, \textit{The De Giorgi conjecture on elliptic regularization}, Math. Models Methods Appl. Sci. 21 (2011), 1377--1394.

\bibitem{S2} U. Stefanelli, {\textit{The weighted Inertia-Energy-Dissipation principle}},  Mathematical Models and Methods in Applied Sciences 35, No. 02 (2025), 223--282. 







\bibitem{TT1} L. Tentarelli, P. Tilli, {\textit{De Giorgi’s approach to hyperbolic Cauchy problems: the case of nonhomogeneous equations}}, Comm. Partial Differential Equations 43 (4) (2018), 677--698.

\bibitem{TT2} L. Tentarelli, P. Tilli, {\textit{An existence result for dissipative nonhomogeneous hyperbolic equations via a minimization approach}},
J. Differential Equations 266 (2019), 5185--5208.







\bibitem{V} J. L. V\'azquez, {\textit{The Mathematical Theories of Diffusion: Nonlinear and Fractional Diffusion}}, In: Bonforte, M., Grillo, G. (eds) Nonlocal and Nonlinear Diffusions and Interactions: New Methods and Directions. Lecture Notes in Mathematics, vol 2186. Springer, Cham (2017).

\bibitem{V2} 
J. L. V\'azquez, J. L. {\textit {Asymptotic behaviour for the fractional heat equation in the Euclidean space}}. Complex Variables and Elliptic Equations 63(7–8) (2017), 1216--1231. 



\end{thebibliography}
\end{document}